
\documentclass[12pt,leqno]{article}

\usepackage{amsmath,amssymb,amsthm}
\usepackage[all]{xy}
\usepackage{lscape}

\makeatletter


\newtheorem{theorem}{Theorem}[section]

\newtheorem{lemma}[theorem]{Lemma}
\newtheorem{corollary}[theorem]{Corollary}

\theoremstyle{definition}

\newtheorem{example}[theorem]{Example}

\theoremstyle{remark}

\theoremstyle{remark}

\def\({{\rm (}}
\def\){{\rm )}}

\let\Mathrm\operator@font
\let\Cal\mathcal
\let\Bbb\mathbb
\newcommand{\fm}{\ensuremath{\mathfrak m}}
\newcommand{\fn}{\ensuremath{\mathfrak n}}

\def\standop#1{\mathop{\Mathrm #1}\nolimits}
\def\difstop#1#2{\expandafter\def\csname #1\endcsname{\standop{#2}}}
\def\defstop#1{\difstop{#1}{#1}}

\defstop{AB}
\defstop{ann}
\defstop{Ass}
\defstop{Add}
\defstop{Alt}
\defstop{Ass}
\defstop{add}
\defstop{asn}
\def\abs#1{\lvert{#1}\rvert}
\difstop{adim}{AFHdim}

\defstop{CMFI}
\defstop{codim}
\defstop{Coh}
\defstop{coht}
\defstop{Coker}
\defstop{Cone}
\defstop{Cl}
\defstop{Cox}
\defstop{cosk}
\defstop{cd}
\defstop{cmd}
\difstop{charac}{char}

\defstop{depth}
\defstop{Der}
\defstop{Div}
\defstop{div}
\defstop{det}

\defstop{EM}
\defstop{embdim}
\defstop{End}
\defstop{ev}
\defstop{Ext}

\difstop{fdim}{flat.dim}
\defstop{Flat}
\defstop{Func}
\defstop{Fpqc}
\defstop{FL}
\defstop{FS}
\def\floor#1{\lfloor{#1}\rfloor}

\def\GL{\text{\sl{GL}}}
\defstop{GD}
\defstop{Good}
\defstop{Gal}

\difstop{height}{ht}
\defstop{Hom}

\difstop{Image}{Im}
\defstop{Ind}
\defstop{ind}

\defstop{Ker}

\defstop{Lch}
\defstop{length}
\defstop{Lin}
\defstop{Lqc}
\defstop{lqc}
\defstop{LQ}
\defstop{LM}
\defstop{Lie}
\defstop{Loc}

\defstop{Mat}
\defstop{Max}
\defstop{Min}
\defstop{Mod}
\defstop{Mor}
\defstop{MCM}
\defstop{Map}
\difstop{mred}{red}
\difstop{md}{mod}

\defstop{Nerve}
\defstop{NonSFR}
\defstop{NonCMFI}
\defstop{NonNor}
\defstop{Nor}
\defstop{NF}
\defstop{nsurj}
\def\norm#1{\lVert{#1}\rVert}

\defstop{ob}
\defstop{Ob}

\defstop{PA}
\defstop{PM}
\defstop{PR}
\defstop{Proj}
\defstop{Prin}
\defstop{Pic}

\defstop{Qch}
\defstop{qch}

\defstop{rad}
\defstop{rank}

\defstop{res}
\defstop{Reg}
\defstop{Ref}
\def\RR{{\mathbb R}}

\defstop{Spec}
\defstop{supp}
\defstop{Supp}
\defstop{surj}
\defstop{Sym}
\defstop{Sing}
\defstop{SFR}
\defstop{Soc}
\defstop{soc}
\defstop{Sh}
\difstop{sm}{sum}

\difstop{tdeg}{trans.deg}

\defstop{Tor}
\defstop{TRD}
\difstop{trace}{tr}

\defstop{Zar}
\def\ZZ{{\mathbb Z}}

\def\C{\Cal C}

\def\fd{\mathfrak{d}}

\def\fm{\mathfrak{m}}



\def\sdarrow#1{\downarrow\hbox to 0pt{\scriptsize$#1$\hss}}
\def\suarrow#1{\uparrow\hbox to 0pt{\scriptsize$#1$\hss}}
\def\ssearrow#1{\searrow\hbox to 0pt{\scriptsize$#1$\hss}}

\def\ext{{\textstyle\bigwedge}}


\def\section{\@startsection{section}{1}{\z@ }%
  {-3.5ex plus -1ex minus -.2ex}{2.3ex plus .2ex}{\bf }}

\long\def\refname{\par\kern -3ex
  \begin{center}\rm R\sc{eferences}\end{center}\par\kern 
  -2ex}

\def\@seccntformat#1{\csname the#1\endcsname.\quad}

\def\@@@sect#1#2#3#4#5#6[#7]#8{%
  \ifnum #2>\c@secnumdepth 
  \def \@svsec {}\else \refstepcounter {#1}%
  \def\@svsec{}
  \fi 
  \@tempskipa #5\relax 
  \ifdim \@tempskipa >\z@ 
  \begingroup #6\relax \@hangfrom {\hskip #3\relax 
    \@svsec}{\interlinepenalty \@M #8\par }\endgroup 
  \csname #1mark\endcsname {#7}
  \else 
  \def \@svsechd {#6\hskip #3\@svsec #8\csname #1mark\endcsname {#7}}
  \fi \@xsect {#5}}

\def\@@@startsection#1#2#3#4#5#6{%
  \if@noskipsec \leavevmode \fi \par \@tempskipa #4\relax \@afterindenttrue 
  \ifdim \@tempskipa <\z@ \@tempskipa -\@tempskipa \@afterindentfalse 
  \fi \if@nobreak \everypar {}\else \addpenalty {\@secpenalty }\addvspace 
  {\@tempskipa }\fi \@ifstar {\@ssect {#3}{#4}{#5}{#6}}{\@dblarg 
    {\@@@sect {#1}{#2}{#3}{#4}{#5}{#6}}}}

\def\theparagraph{\thesection.\arabic{paragraph}}
\def\aparagraph{\@@@startsection{paragraph}{2}{\z@ }%
  {1.75ex plus .2ex minus .15ex}{-1em}{\bf(\theparagraph) } }
\def\paragraph{\@@@startsection{paragraph}{2}{\z@ }%
  {1.75ex plus .2ex minus .15ex}{-1em}{}{\bf(\theparagraph)} }

\c@secnumdepth 3
\let\c@theorem\c@paragraph

\title{$F$-rationality of the ring of modular invariants}
  \author{M{\sc itsuyasu} H{\sc ashimoto}}
\date{\normalsize
  Okayama University\\
  Okayama 700--8530, JAPAN\\
  {\small \tt mh@okayama-u.ac.jp}\\
  ~\\
  Dedicated to Professor Kei-ichi Watanabe
}

\begin{document}

\maketitle
\footnote[0]
{2010 \textit{Mathematics Subject Classification}. 
  Primary 13A50, 13A35.
  Key Words and Phrases.
  $F$-rational, $F$-regular, dual $F$-signature, Frobenius limit.
}

\begin{abstract}
  Using the description of the Frobenius
  limit of modules over the ring
  of invariants under an action
  of a finite group on a polynomial ring
  over a field of characteristic $p>0$ developed by Symonds and
  the author,
  we give a characterization of
  the ring of invariants with a positive dual $F$-signature.
  Combining this result and Kemper's result on depths of the ring of
  invariants under
  an action of a permutation group, we give an example of an $F$-rational,
  but non-$F$-regular ring of invariants under the action of a finite group.
\end{abstract}

\section{Introduction}

Let $k$ be an algebraically closed field of characteristic $p>0$.
Let $V=k^d$, and $G$ a finite subgroup of $\GL(V)$ without psuedo-reflections.
Let $B=\Sym V$, the symmetric algebra of $V$, and $A=B^G$.
If the order $\abs{G}$ of $G$
is not divisible by $p$, then $A$ is a direct summand subring
of $B$, and is strongly $F$-regular.

Let $p$ divides $\abs{G}$.
Broer \cite{Broer} proved that 
$A$ is not a direct summand subring of $B$ hence $A$ is not weakly $F$-regular
(as $A$ is not a splinter).

In this paper, we study when $A$ is $F$-rational.
In \cite{Glassbrenner}, Glassbrenner showed that the invariant subring
$k[x_1,\ldots,x_n]^{A_n}$, where $A_n$ is the alternating group which acts on
the polynomial ring by permutation of variables, is Gorenstein $F$-pure but
not $F$-rational when $p=\charac(k)\geq 3$ and $n\equiv 0,1 \mod p$.
In this paper, we give an example of $A$ which is $F$-rational
(but not $F$-regular).

Sannai \cite{Sannai} defined the dual $F$-signature $s(M)$ of a finite module
$M$ over an $F$-finite
local ring $R$ of characteristic $p$.
He proved that $R$ is $F$-rational if and only if $R$ is Cohen--Macaulay and
the dual $F$-signature $s(\omega_R)$ of the canonical module $\omega_R$ of $R$
is positive.
Utilizing the description of the Frobenius limit of modules over
$\hat A$ (the completion of $A$) by Symonds and the author,
we give a characterization of $V$ such that $s(\omega_{\hat A})>0$,
see Theorem~\ref{main.thm}.
The characterization is purely representation theoretic in the sense that the
characterization depends only on the structure of $B$ as a $G$-module,
rather than a $G$-algebra.

Using the characterization and Kemper's result on the depth of the ring of invariants
under the action of certain groups of permutations \cite[(3.3)]{Kemper},
we give an example of $F$-rational $A$ for $p\geq 5$.
We also get an example of $A$ such that the dual $F$-signature $s_{\omega_{\hat A}}$ of
the canonical module of the completion $\hat A$ is positive, but $A$ (or equivalently,
$\hat A$) is not Cohen--Macaulay.
See Theorem~\ref{main-example.thm}.

In section~\ref{asn.sec}, we introduce the
{\em asymptotic surjective number} $\asn_N(M)$ for two finitely
generated modules $M$ and $N$ ($N\neq 0$) over a Noetherian ring $R$,
see Lemma~\ref{asn.lem}.
In section~\ref{dual-F-signature.sec}, using the definition and some basic results
developed in section~\ref{asn.sec}, we prove the formula
$s(M)=\asn_M(\FL([M]))$, where $\FL$ denotes the Frobenius
limit defined in \cite{SH}.
Thus $s(M)$ depends only on $\FL([M])$.
Using this, we give a characterization of a module $M$ to have
positive $s(M)$ in terms of $\FL([M])$ (Corollary~\ref{s-positive.cor}).

Using this result and the description of the Frobenius
limits of certain modules over
$\hat A$ proved in \cite{SH}, we give a characterization of $V$ such that
$s(\omega_{\hat A})>0$ in section~\ref{main.sec}.

In section~\ref{main-example.sec}, we give the examples.

\medskip

Acknowledgments.
The author is grateful to
Professor Anurag Singh
and
Professor Kei-ichi Watanabe
for valuable discussion.

\section{Asymptotic surjective number}\label{asn.sec}

\paragraph
This paper heavily depends on \cite{SH}.

\paragraph
Let $R$ be a Noetherian commutative ring.
Let $\md R$ denote the category of finite $R$-modules.

\paragraph
For $M,N\in\md R$, we set
\begin{multline*}
\surj_N^R(M)=
\surj_N(M):=
\\
\sup\{n\in\Bbb Z_{\geq 0}\mid \text{There is a surjective $R$-linear
  map $M\rightarrow N^{\oplus n}$}\},
\end{multline*}
and call $\surj_N(M)$ the surjective number of $M$ with respect to $N$.
If $N=0$, this is understood to be $\infty$.

\begin{lemma}\label{surj.lem}
  Let $M,M',N\in\md R$.
  Then we have the following.
  \begin{enumerate}
  \item[\bf 1] If $R'$ is any Noetherian $R$-algebra, then
    \[
    \surj_N^R(M)\leq \surj_{R'\otimes_R N}^{R'}(R'\otimes_R M).
    \]
  \item[\bf 2] If $(R,\fm)$ is local and $N\neq 0$, then
    $\surj^R_N(M)\leq \mu_R(M)/\mu_R(N)$, where $\mu_R=\ell_R(R/\fm\otimes_R ?)$ denotes
    the number of generators.
  \item[\bf 3] If $N\neq 0$, then $\surj_N(M)<\infty$, and is a non-negative integer.
  \item[\bf 4] If $N\neq 0$, then $\surj_N(M)+\surj_N(M')\leq \surj_N(M\oplus M')$.
  \item[\bf 5] If $N\neq 0$ and $r\geq 0$, then $r\surj_N(M)\leq
    \surj_N(M^{\oplus r})$.
    \end{enumerate}
\end{lemma}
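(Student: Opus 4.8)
Each claim is an elementary consequence of the definition of $\surj_N(M)$ as the largest number of copies of $N$ that split off (surjectively) from $M$; I would prove them in the order 1, 3, 2, 4, 5 rather than 1--5, because 3 is the one actually needing work and 2 is the tool for it.

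For \textbf{1}: if $f\colon M\to N^{\oplus n}$ is an $R$-linear surjection, then $R'\otimes_R f\colon R'\otimes_R M\to R'\otimes_R(N^{\oplus n})\cong (R'\otimes_R N)^{\oplus n}$ is a surjection of $R'$-modules by right-exactness of $R'\otimes_R-$; hence every admissible $n$ for the left side is admissible for the right side, giving the inequality of suprema (with the usual convention if $N=0$, in which case $R'\otimes_R N=0$ and both sides are $\infty$). For \textbf{3} I would first dispose of the local case and then reduce to it. Suppose $(R,\fm)$ is local and $N\neq 0$. Given a surjection $M\twoheadrightarrow N^{\oplus n}$, apply $R/\fm\otimes_R-$ and use Nakayama: the number of generators is superadditive under surjection of the relevant kind, precisely $\mu_R(M)\geq\mu_R(N^{\oplus n})=n\,\mu_R(N)$, and $\mu_R(N)\geq 1$ since $N\neq0$. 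This proves \textbf{2}, and in particular bounds $n$ by $\mu_R(M)/\mu_R(N)<\infty$, so in the local case $\surj_N(M)$ is a finite non-negative integer (it is a supremum of a nonempty bounded set of non-negative integers — nonempty because $n=0$ always qualifies). For the general Noetherian case, choose any maximal ideal $\fm$ with $N_\fm\neq 0$ (possible since $N\neq 0$, e.g. $\fm\supseteq\ann_R N$ with $N_\fm\neq0$; concretely pick $\fm\in\Supp N$ containing a minimal prime of $\Supp N$, or just any $\fm\in\Supp N$) and apply part \textbf{1} with $R'=R_\fm$: $\surj_N^R(M)\leq\surj_{N_\fm}^{R_\fm}(M_\fm)<\infty$ by the local case.

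For \textbf{4}: given surjections $M\twoheadrightarrow N^{\oplus a}$ and $M'\twoheadrightarrow N^{\oplus b}$, their direct sum is a surjection $M\oplus M'\twoheadrightarrow N^{\oplus(a+b)}$, so $\surj_N(M\oplus M')\geq a+b$; taking $a=\surj_N(M)$ and $b=\surj_N(M')$ (both finite by \textbf{3}) gives the claim. For \textbf{5}: iterate \textbf{4} — induction on $r$ using $\surj_N(M^{\oplus r})=\surj_N(M^{\oplus(r-1)}\oplus M)\geq \surj_N(M^{\oplus(r-1)})+\surj_N(M)\geq (r-1)\surj_N(M)+\surj_N(M)=r\,\surj_N(M)$, the base case $r=0$ being $0\leq\surj_N(0\text{-module})$, trivially true.

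\textbf{Main obstacle.} There is no deep obstacle; the only point requiring care is the finiteness in \textbf{3} in the non-local case, where one must legitimately localize at a prime (or maximal ideal) in $\Supp N$ so that $N$ stays nonzero after base change — without this, part \textbf{1} gives only a vacuous bound. A secondary subtlety is bookkeeping the $N=0$ convention consistently through \textbf{1} and \textbf{4}--\textbf{5}, but the statements of \textbf{2}--\textbf{5} already exclude that case.
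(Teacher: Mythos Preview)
Your proposal is correct and follows essentially the same approach as the paper: part \textbf{1} via right-exactness of base change, part \textbf{2} by passing to the residue field and dimension counting (the paper phrases this as applying \textbf{1} with $R'=R/\fm$, which is exactly your ``apply $R/\fm\otimes_R-$''), part \textbf{3} by localizing at some $\fm\in\supp_R N$ and invoking \textbf{1} and \textbf{2}, part \textbf{4} by summing surjections, and part \textbf{5} by iterating \textbf{4}. The only cosmetic difference is your narrative reordering of \textbf{2} and \textbf{3}; the logical dependencies are identical.
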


\begin{proof}
  {\bf 1}
  If there is a surjective $R$-linear map $M\rightarrow N^{\oplus n}$,
  then there is a surjective $R'$-linear map $R'\otimes_R M
  \rightarrow (R'\otimes_R N)^{\oplus n}$, and hence $n\leq \surj_{R'\otimes_R N}^{R'}(
  R'\otimes_R M)$.

  {\bf 2} By {\bf 1}, $\surj^R_N(M)\leq \surj^{R/\fm}_{N/\fm N}(M/\fm M)\leq
  \mu_R(M)/\mu_R(N)$ by dimension counting.

  {\bf 3} Take $\fm\in \supp_R N$.
  Then
\[
  \surj^R_N(M)\leq \surj^{R_\fm}_{N_\fm}(M_\fm)\leq \mu_{R_\fm}(M_\fm)/\mu_{R_\fm}(N_\fm)
  <\infty.
\]

  {\bf 4} Let $n=\surj_N(M)$ and $n'=\surj_N(M')$.
  Then there are surjective $R$-linear maps $M\rightarrow N^{\oplus n}$ and
  $M'\rightarrow N^{\oplus n'}$.
  Summing them, we get a surjective map
  $M\oplus M'\rightarrow N^{\oplus(n+n')}$.

  {\bf 5} follows from {\bf 4}.
\end{proof}

\paragraph
Let $N,M\in\md R$.
Assume that $N$ is nonzero.
We define
\[
\nsurj_N(M;r):=\frac{1}{r}\surj_N(M^{\oplus r})
\]
for $r\geq 1$.

\begin{lemma}\label{nsurj.lem}
  Let $r\geq 1$, and $M,M',N\in\md R$.
  Assume that $N\neq 0$.
  Then
  \begin{enumerate}
  \item[\bf 1] $\nsurj_N(M;1)=\surj_N(M)$.
  \item[\bf 2] $\nsurj_N(M;kr)\geq \nsurj_N(M;r)$ for $k\geq 0$.
  \item[\bf 3] $\nsurj_N(M;r)\geq \surj_N(M)\geq 0$.
  \item[\bf 4] $\nsurj_N(M;r)+\nsurj_N(M';r)\leq \nsurj_N(M\oplus M';r)$.
  \item[\bf 5] If $R\rightarrow R'$ is a homomorphism of Noetherian rings, then
    $\nsurj_N(M;r)\leq \nsurj_{R'\otimes_R N}(R'\otimes_R M;r)$.
  \item[\bf 6] 
    If $(R,\fm)$ is local, $\nsurj_N(M;r)\leq \mu_R(M)/\mu_R(N)$.
    In general, $\nsurj_N(M;r)$ is bounded.
  \end{enumerate}
\end{lemma}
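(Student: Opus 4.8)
The plan is to derive every item directly from Lemma~\ref{surj.lem} by unwinding the definition $\nsurj_N(M;r)=\frac1r\surj_N(M^{\oplus r})$ and using the obvious isomorphisms $(M^{\oplus r})^{\oplus k}\cong M^{\oplus kr}$ and $(M\oplus M')^{\oplus r}\cong M^{\oplus r}\oplus M'^{\oplus r}$. Item {\bf 1} is immediate since $M^{\oplus 1}=M$. For item {\bf 3} I would apply part {\bf 5} of Lemma~\ref{surj.lem} with multiplicity $r$ to get $r\surj_N(M)\le\surj_N(M^{\oplus r})$, divide by $r$, and then invoke part {\bf 3} of Lemma~\ref{surj.lem} for the nonnegativity of $\surj_N(M)$. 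Item {\bf 2} is the same trick applied one level up: regard $M^{\oplus kr}$ as $(M^{\oplus r})^{\oplus k}$, apply Lemma~\ref{surj.lem}.{\bf 5} with module $M^{\oplus r}$ and multiplicity $k$ to get $k\surj_N(M^{\oplus r})\le\surj_N(M^{\oplus kr})$, and divide by $kr$; this gives the claim for $k\ge1$, the degenerate case $k=0$ being vacuous.

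Item {\bf 4} follows by writing both sides with the common factor $\frac1r$ and applying the superadditivity in Lemma~\ref{surj.lem}.{\bf 4} to $M^{\oplus r}$ and $M'^{\oplus r}$, whose direct sum is $(M\oplus M')^{\oplus r}$. Item {\bf 5} follows from Lemma~\ref{surj.lem}.{\bf 1} applied to the module $M^{\oplus r}$, using that base change commutes with finite direct sums, so $R'\otimes_R(M^{\oplus r})\cong(R'\otimes_R M)^{\oplus r}$ and $\surj_N^R(M^{\oplus r})\le\surj_{R'\otimes_R N}^{R'}((R'\otimes_R M)^{\oplus r})$; dividing by $r$ gives the inequality. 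For item {\bf 6}, in the local case I would use Lemma~\ref{surj.lem}.{\bf 2} with the module $M^{\oplus r}$ together with $\mu_R(M^{\oplus r})=r\,\mu_R(M)$, so the factor $r$ cancels and the bound $\mu_R(M)/\mu_R(N)$ drops out. For the general case I would pick $\fm\in\supp_R N$ (nonempty since $N\neq0$), apply item {\bf 5} to the localization map $R\to R_\fm$, and then the local case already proved to bound $\nsurj_{N_\fm}(M_\fm;r)$ by $\mu_{R_\fm}(M_\fm)/\mu_{R_\fm}(N_\fm)<\infty$.

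There is essentially no obstacle here: the content is entirely contained in Lemma~\ref{surj.lem}, and these are the formal consequences of normalizing $\surj_N$ by the multiplicity. The only point requiring a moment's care is the edge behavior in item {\bf 2} at $k=0$ (equivalently, the meaning of $\nsurj$ at $r=0$), which I would simply avoid by treating $k\ge1$, the multiplicity for which Lemma~\ref{surj.lem}.{\bf 5} gives a meaningful bound.
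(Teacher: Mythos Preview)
Your proposal is correct and matches the paper's proof essentially line by line: each item is reduced to the corresponding part of Lemma~\ref{surj.lem} after unwinding the definition of $\nsurj$. The only cosmetic difference is that the paper obtains {\bf 3} by combining {\bf 1} and {\bf 2} (take $r=1$ in {\bf 2} and multiply out), whereas you invoke Lemma~\ref{surj.lem}.{\bf 5} directly; these are the same computation, and your remark about the vacuous case $k=0$ is a fair observation that the paper simply passes over.
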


\begin{proof}
  {\bf 1} is by definition.

  {\bf 2}. $kr\nsurj_N(M;kr)=\surj_N(M^{\oplus kr})\geq k\surj_N(M^{\oplus r})$
  by Lemma~\ref{surj.lem}, {\bf 5}.
  Dividing by $kr$, we get the desired inequality.

  {\bf 3}. This is immediate by {\bf 1} and {\bf 2}.
  
  {\bf 4} follows from Lemma~\ref{surj.lem}, {\bf 4}.
  
  {\bf 5} follows from Lemma~\ref{surj.lem}, {\bf 1}.

  {\bf 6} The first assertion is by Lemma~\ref{surj.lem}, {\bf 2}.
  The second assertion follows from the first assertion and {\bf 5} applied to
  $R\rightarrow R'=R_\fm$, where $\fm$ is any element of $\supp_R N$.
\end{proof}

\begin{lemma}\label{asn.lem}
  Let $M,N\in\md R$.
  Assume that $N\neq 0$.
  Then the limit
\[
\lim_{r\rightarrow\infty}\nsurj_N(M;r)
=
\lim_{r\rightarrow\infty}\frac{1}{r}\surj_N(M^{\oplus r})
\]
exists.
\end{lemma}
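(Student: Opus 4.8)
The plan is to show that the sequence $a_r := \surj_N(M^{\oplus r})$ is superadditive, i.e. $a_r + a_s \le a_{r+s}$, and bounded above by a linear function of $r$, so that Fekete's subadditivity lemma (applied to $-a_r$, or rather its superadditive analogue) yields that $\lim_{r\to\infty} a_r/r$ exists and equals $\sup_r a_r/r$.

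First I would establish superadditivity. By definition $a_r = \surj_N(M^{\oplus r})$; applying Lemma~\ref{surj.lem}, {\bf 4} with the pair of modules $M^{\oplus r}$ and $M^{\oplus r'}$ gives
\[
\surj_N(M^{\oplus r}) + \surj_N(M^{\oplus r'}) \le \surj_N(M^{\oplus r}\oplus M^{\oplus r'}) = \surj_N(M^{\oplus(r+r')}),
\]
that is, $a_r + a_{r'} \le a_{r+r'}$, for all $r,r'\ge 1$ (with $N\ne 0$ as assumed). Thus $(a_r)_{r\ge 1}$ is a superadditive sequence of non-negative integers.

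Next I would invoke boundedness. Lemma~\ref{nsurj.lem}, {\bf 6} asserts that $\nsurj_N(M;r) = a_r/r$ is bounded, say $a_r/r \le C$ for all $r\ge 1$, so $a_r \le Cr$. Now the standard Fekete argument applies: for a superadditive sequence, $\lim_{r\to\infty} a_r/r$ exists in $[-\infty,\infty]$ and equals $\sup_{r\ge 1} a_r/r$; since the $a_r/r$ are bounded above by $C$ and bounded below by $a_1 = \surj_N(M)\ge 0$ (using Lemma~\ref{nsurj.lem}, {\bf 3}), this supremum is a finite real number, so the limit exists. For completeness I would recall the short proof of Fekete's lemma: fix $r$; for any $s\ge 1$ write $s = qr + t$ with $0\le t < r$, so by iterated superadditivity $a_s \ge q\,a_r + a_t$ (interpreting $a_0 = 0$, or absorbing the $t=0$ case directly), hence $a_s/s \ge (q a_r)/s \to a_r/r$ as $s\to\infty$; thus $\liminf_{s\to\infty} a_s/s \ge a_r/r$ for every $r$, giving $\liminf_{s\to\infty} a_s/s \ge \sup_r a_r/r \ge \limsup_{s\to\infty} a_s/s$, and equality throughout.

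**The main obstacle** is essentially nonexistent here — all the real content (superadditivity, boundedness) has been packaged into the preceding two lemmas — so the only care needed is the bookkeeping in the Fekete estimate: handling the remainder term $a_t$ (or the $t=0$ case) correctly and making sure the division by $s$ rather than by $qr$ does not change the limit, which it does not since $s/(qr)\to 1$. One could alternatively cite Fekete's lemma directly rather than reproving it.
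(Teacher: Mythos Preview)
Your proof is correct and follows essentially the same approach as the paper: both establish superadditivity of $a_r=\surj_N(M^{\oplus r})$ via Lemma~\ref{surj.lem}, {\bf 4} and boundedness of $a_r/r$ via Lemma~\ref{nsurj.lem}, {\bf 6}, and then run the Fekete argument. The only cosmetic difference is that you invoke Fekete's lemma by name (with the standard $\liminf\geq\sup_r a_r/r\geq\limsup$ proof), whereas the paper rephrases the same estimate as a proof by contradiction, assuming $\limsup>\liminf$ and deriving $I>I$.
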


We call the limit the {\em
  asymptotic surjective number of $M$ with respect to $N$},
and denote it by $\asn_N(M)$.

\begin{proof}
  As $\nsurj_N(M;r)$ is bounded,
  $S=\limsup_{r\rightarrow\infty}\nsurj_N(M;r)$ and
  $I=\liminf_{r\rightarrow\infty}\nsurj_N(M;r)$ exist.
  Assume for contradiction that the limit does not exist.
  Then $S>I$.
  Set $\varepsilon=(S-I)/2>0$.

  There exists some $r_0\geq 1$ such that $\nsurj_N(M;r_0)>S-\varepsilon/2$.
  Take $n_0\geq 1$ sufficiently large so that
  $\nsurj_N(M;r_0)/n_0<\varepsilon/2$.
  Let $r\geq r_0n_0$, and set $n:=\floor{r/r_0}$.
  Note that $nr_0\leq r<(n+1)r_0$ and $n\geq n_0$.

  Then
  \begin{multline*}
  \nsurj_N(M;r)\geq \frac{1}{(n+1)r_0}\surj_N(M^{\oplus nr_0})
  \geq \frac{n}{(n+1)r_0}\surj_N(M^{\oplus r_0})\\
  =(1-\frac{1}{n+1})\nsurj_N(M;r_0)\geq \nsurj_N(M;r_0)-\varepsilon/2
  >S-\varepsilon.
  \end{multline*}
  Hence
  \[
  I\geq \inf_{r\geq r_0n_0}\nsurj_N(M;r)\geq S-\varepsilon>S-2\varepsilon=I,
  \]
  and this is a contradiction.
\end{proof}

\begin{lemma}
  Let $M,M',N\in\md R$, and $N\neq 0$.
  Then
  \begin{enumerate}
  \item[\bf 1] $\asn_N(M^{\oplus r})=r\asn_N(M)$.
  \item[\bf 2] $0\leq \surj_N(M)\leq\nsurj_N(M;r)\leq \asn_N(M)$ for any $r\geq 1$.
  \item[\bf 3] $\asn_N(M)+\asn_N(M')\leq \asn_N(M\oplus M')$.
  \end{enumerate}
\end{lemma}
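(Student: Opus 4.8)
The plan is to derive all three assertions from the existence of the limit established in Lemma~\ref{asn.lem}, combined with the monotonicity and additivity already recorded in Lemmas~\ref{surj.lem} and~\ref{nsurj.lem}. The single device used throughout is that, once we know $\lim_{t\to\infty}\nsurj_N(M;t)$ exists, we may compute it along any subsequence, and in particular along an arithmetic progression $t=rs$, $s\to\infty$.

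For {\bf 1}, I would unwind the definition: $\asn_N(M^{\oplus r})=\lim_{s\to\infty}\tfrac1s\surj_N\bigl((M^{\oplus r})^{\oplus s}\bigr)=\lim_{s\to\infty}\tfrac1s\surj_N(M^{\oplus rs})$, and note that $\tfrac1s\surj_N(M^{\oplus rs})=r\cdot\tfrac1{rs}\surj_N(M^{\oplus rs})=r\,\nsurj_N(M;rs)$. Since the subsequence $\nsurj_N(M;rs)$ converges to $\asn_N(M)$ as $s\to\infty$, the limit on the left equals $r\,\asn_N(M)$.

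For {\bf 2}, the first inequality $0\le\surj_N(M)$ is Lemma~\ref{surj.lem}, {\bf 3}, and $\surj_N(M)\le\nsurj_N(M;r)$ is Lemma~\ref{nsurj.lem}, {\bf 3}. For $\nsurj_N(M;r)\le\asn_N(M)$, I would invoke Lemma~\ref{nsurj.lem}, {\bf 2}, giving $\nsurj_N(M;kr)\ge\nsurj_N(M;r)$ for all $k\ge0$; letting $k\to\infty$ and using once more that $\nsurj_N(M;kr)\to\asn_N(M)$ yields the inequality.

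For {\bf 3}, I would add the two convergent sequences, $\asn_N(M)+\asn_N(M')=\lim_{r\to\infty}\bigl(\nsurj_N(M;r)+\nsurj_N(M';r)\bigr)$, bound the summand by $\nsurj_N(M\oplus M';r)$ via Lemma~\ref{nsurj.lem}, {\bf 4}, and pass to the limit to obtain $\asn_N(M\oplus M')$. None of this goes beyond limits of bounded, monotone, or additive sequences, so I expect no genuine obstacle; the only point needing a moment's care is the repeated passage to subsequences, which is legitimate precisely because Lemma~\ref{asn.lem} guarantees the full limit exists.
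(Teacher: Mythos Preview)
Your proof is correct and follows essentially the same approach as the paper: part~{\bf 1} via the subsequence $rr'$, part~{\bf 3} by passing Lemma~\ref{nsurj.lem}, {\bf 4} to the limit, and part~{\bf 2} from Lemma~\ref{nsurj.lem}, {\bf 3} together with a limit argument. The only cosmetic difference is in the last inequality of {\bf 2}: the paper first deduces $\surj_N(M)\le\asn_N(M)$ and then applies this to $M^{\oplus r}$ together with part~{\bf 1}, whereas you take the limit along the subsequence $kr$ using Lemma~\ref{nsurj.lem}, {\bf 2}; both routes are equally valid.
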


\begin{proof}
  {\bf 1}.
  \[
  r^{-1}\asn_N(M^{\oplus r})=\lim_{r'\rightarrow \infty}\frac{1}{rr'}\surj_N(M^{\oplus rr'})
  =\asn_N(M).
  \]

  {\bf 2}. $0\leq \surj_N(M)\leq \nsurj_N(M;r)$ is Lemma~\ref{nsurj.lem}, {\bf 3}.
  So taking the limit, $\surj_N(M)\leq \asn_N(M)$.
  So $\surj_N(M^{\oplus r})\leq \asn_N(M^{\oplus r})=r\asn_N(M)$.
  Dividing by $r$, $\nsurj_N(M;r)\leq \asn_N(M)$.
\end{proof}

\begin{lemma}\label{easy-vector-space.lem}
  Let $k$ be a field, and $V$ a $k$-vector space, and $n\geq 0$.
  Assume that $\dim_k V\leq n$.
  Let $\Gamma$ be a set of subspaces of $V$ such that $\sum_{U\in\Gamma}U=V$.
  Then there exist some $U_1,\ldots,U_{n'}\in\Gamma$ with $n'\leq n$
  such that $U_1+\cdots+U_{n'}=V$.
\end{lemma}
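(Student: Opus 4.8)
Let $k$ be a field, $V$ a $k$-vector space with $\dim_k V \le n$, and $\Gamma$ a set of subspaces with $\sum_{U\in\Gamma} U = V$. We want finitely many $U_1,\dots,U_{n'} \in \Gamma$, $n' \le n$, with $U_1 + \cdots + U_{n'} = V$.

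The plan is a completely elementary greedy/dimension-counting argument. First I would handle the trivial case $V = 0$: take $n' = 0$ (the empty sum is $0$), and we are done. So assume $\dim_k V = m$ with $1 \le m \le n$. Now build the $U_i$ one at a time. Since $\sum_{U\in\Gamma} U = V \ne 0$, not every $U \in \Gamma$ is the zero subspace; pick $U_1 \in \Gamma$ with $U_1 \ne 0$, so $\dim_k(U_1) \ge 1$. Inductively, suppose we have chosen $U_1,\dots,U_j \in \Gamma$ with $\dim_k(U_1 + \cdots + U_j) \ge j$. If $U_1 + \cdots + U_j = V$ we stop. Otherwise, since $\sum_{U\in\Gamma} U = V \supsetneq U_1 + \cdots + U_j$, there must be some $U_{j+1} \in \Gamma$ with $U_{j+1} \not\subseteq U_1 + \cdots + U_j$ (if every $U \in \Gamma$ were contained in that proper subspace, their sum would be too, contradicting $\sum_{U\in\Gamma}U = V$). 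Then $U_1 + \cdots + U_j + U_{j+1}$ strictly contains $U_1 + \cdots + U_j$, so its dimension is at least $j+1$.

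This process must terminate: the dimension of the partial sum strictly increases at each step and is bounded above by $m = \dim_k V \le n$. Hence after at most $m$ steps we reach $U_1 + \cdots + U_{n'} = V$ for some $n' \le m \le n$, which is exactly what was claimed.

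I do not expect any real obstacle here — the only things to be slightly careful about are the degenerate case $V = 0$ (handled by the empty family) and the justification that a proper subspace cannot contain every member of $\Gamma$, which is immediate from $\sum_{U \in \Gamma} U = V$. No finiteness or cardinality hypothesis on $\Gamma$ itself is needed, since the bound $n$ on $\dim_k V$ forces the greedy selection to stop in at most $n$ steps.
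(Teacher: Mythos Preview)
Your argument is correct and complete; the paper itself simply writes ``Trivial.'' for this lemma, so your greedy dimension-counting is exactly the kind of elementary argument the author had in mind and chose to omit.
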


\begin{proof}
  Trivial.
\end{proof}

\begin{lemma}\label{easy-vector-space2.lem}
  Let $k$ be a field, $V$ a $k$-vector space, and $\Gamma$ a set of subspaces of
  $V$.
  Let $W$ and $W'$ be subspaces of $V$ such that $W+W'=V$.
  Assume that $W'\subset \sum_{U\in\Gamma}U$.
  If $\dim_k W'\leq n$, then there exist some $U_1,\ldots,U_{n'}\in\Gamma$ with
  $n'\leq n$ such that 
  $W+U_1+\cdots+U_{n'}=V$.
\end{lemma}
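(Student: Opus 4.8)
The idea is to push everything into the quotient vector space $\bar V := V/W$ and invoke Lemma~\ref{easy-vector-space.lem} there. First I would note that the composite $W'\hookrightarrow V\twoheadrightarrow \bar V$ is surjective: this is exactly the hypothesis $W+W'=V$ rephrased. Consequently $\dim_k\bar V\le\dim_k W'\le n$, so $\bar V$ satisfies the dimension bound needed for Lemma~\ref{easy-vector-space.lem} (with the same $n$).

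Next, writing $\bar U\subset\bar V$ for the image of $U\in\Gamma$ under the quotient map $V\to\bar V$, I would check that $\sum_{U\in\Gamma}\bar U=\bar V$. Indeed, the image of $\sum_{U\in\Gamma}U$ in $\bar V$ is $\sum_{U\in\Gamma}\bar U$; since $W'\subset\sum_{U\in\Gamma}U$ and the image of $W'$ is all of $\bar V$ by the previous paragraph, we get $\sum_{U\in\Gamma}\bar U=\bar V$. So the set $\bar\Gamma=\{\bar U\mid U\in\Gamma\}$ of subspaces of $\bar V$ spans $\bar V$.

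Now apply Lemma~\ref{easy-vector-space.lem} to the vector space $\bar V$, the integer $n$, and the family $\bar\Gamma$: there are $U_1,\dots,U_{n'}\in\Gamma$ with $n'\le n$ such that $\bar U_1+\cdots+\bar U_{n'}=\bar V$. Pulling this back along $V\to\bar V=V/W$, and using that the preimage of $\bar U_1+\cdots+\bar U_{n'}$ is $W+U_1+\cdots+U_{n'}$, this says precisely $W+U_1+\cdots+U_{n'}=V$, which is the assertion.

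There is essentially no hard step here; the only point requiring (a little) care is the transfer of the dimension bound, where one must use the full strength of $W+W'=V$ to obtain the surjection $W'\twoheadrightarrow V/W$ (as opposed to merely $W'\subset V$). The possible infiniteness of $\Gamma$ causes no trouble, since Lemma~\ref{easy-vector-space.lem} is already stated for an arbitrary set of subspaces.
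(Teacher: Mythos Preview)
Your argument is correct and follows exactly the paper's approach: the paper's proof is the single sentence ``Apply Lemma~\ref{easy-vector-space.lem} to the vector space $V/W$,'' and you have simply unpacked the details of that application.
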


\begin{proof}
  Apply Lemma~\ref{easy-vector-space.lem} to the vector space $V/W$.
\end{proof}

\begin{lemma}\label{surj-mu.lem}
  Let $(R,\fm)$ be a Noetherian local ring.
  Let $M,M',N\in\md R$ with $N\neq 0$.
  Then
  \[
  \surj_N(M')\leq \surj_N(M\oplus M')-\surj_N(M)\leq \mu_R(M').
  \]
\end{lemma}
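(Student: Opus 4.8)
The plan is to prove the two inequalities separately.

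For the left inequality $\surj_N(M')\leq \surj_N(M\oplus M')-\surj_N(M)$: this is equivalent to $\surj_N(M)+\surj_N(M')\leq \surj_N(M\oplus M')$, which is exactly Lemma~\ref{surj.lem}, {\bf 4}. So there is nothing to do here beyond citing that result.

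For the right inequality $\surj_N(M\oplus M')-\surj_N(M)\leq \mu_R(M')$: set $t=\surj_N(M\oplus M')$, so there is a surjection $\varphi\colon M\oplus M'\twoheadrightarrow N^{\oplus t}$. The idea is to pass to $N/\fm N$ and count dimensions. Let $\bar\varphi\colon (M\oplus M')/\fm(M\oplus M')\to (N/\fm N)^{\oplus t}$ be the induced map; by Nakayama it is still surjective. Write $\bar M=M/\fm M$, $\bar M'=M'/\fm M'$, and $\bar N=N/\fm N$, a nonzero $k$-vector space where $k=R/\fm$. Let $W=\bar\varphi(\bar M)$ and $W'=\bar\varphi(\bar M')$ inside $(\bar N)^{\oplus t}$; then $W+W'=(\bar N)^{\oplus t}$ and $\dim_k W'\leq \dim_k\bar M'=\mu_R(M')$. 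Now I want to say that $W$ itself already "covers" a large direct-sum sub-object of $(\bar N)^{\oplus t}$, in the sense that after a suitable automorphism of $(\bar N)^{\oplus t}$ (equivalently, after choosing $t-\mu_R(M')$ of the summands appropriately) the composite $M\to M\oplus M'\xrightarrow{\varphi} N^{\oplus t}\twoheadrightarrow N^{\oplus(t-\mu_R(M'))}$ is surjective, giving $\surj_N(M)\geq t-\mu_R(M')$, which is the claim. The tool for extracting such a projection is Lemma~\ref{easy-vector-space2.lem}: applied with $V=(\bar N)^{\oplus t}$, the subspaces $W$ (playing the role of the fixed $W$) and the set $\Gamma$ of the $t$ coordinate copies of $\bar N$ (whose sum is $V$ and which contains $W'$), together with $n=\mu_R(M')\geq\dim_k W'$, it produces $n'\leq n$ coordinate summands $U_1,\dots,U_{n'}$ with $W+U_1+\cdots+U_{n'}=V$; equivalently, the projection of $V$ onto the remaining $t-n'\geq t-\mu_R(M')$ summands restricts to a surjection on $W$.

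The one genuinely delicate point is the lift from vector spaces back to modules: surjectivity of $M\to N^{\oplus(t-n')}$ over $R$, not just surjectivity of $\bar M\to (\bar N)^{\oplus(t-n')}$ over $k$. This is handled by Nakayama's lemma, since $N^{\oplus(t-n')}$ is a finite $R$-module: a map of finite modules over a Noetherian local ring is surjective iff it is surjective modulo $\fm$. So the composite $M\to M\oplus M'\to N^{\oplus t}\to N^{\oplus(t-n')}$ (the last map being the projection onto the summands complementary to those indexed by $U_1,\dots,U_{n'}$) is surjective mod $\fm$ by the vector-space argument, hence surjective, and therefore $\surj_N(M)\geq t-n'\geq t-\mu_R(M')=\surj_N(M\oplus M')-\mu_R(M')$. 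Rearranging gives the right-hand inequality, completing the proof. I do not anticipate any serious obstacle beyond keeping the bookkeeping of indices straight; the combinatorial content is entirely contained in the two ``easy-vector-space'' lemmas.
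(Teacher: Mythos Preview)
Your proof is correct and follows essentially the same argument as the paper: cite Lemma~\ref{surj.lem}, {\bf 4} for the left inequality, then for the right inequality reduce modulo $\fm$, apply Lemma~\ref{easy-vector-space2.lem} with $\Gamma$ the coordinate copies of $\bar N$ to find at most $\mu_R(M')$ summands whose sum together with $\bar\varphi(\bar M)$ is all of $\bar N^{\oplus t}$, and lift via Nakayama to get a surjection $M\twoheadrightarrow N^{\oplus(t-n')}$. The only cosmetic difference is that the paper phrases the last step as $\varphi(M)+\sum_{i\in I}N_i=N^{\oplus m}$ and then passes to the quotient, whereas you phrase it directly as surjectivity of the composite with the projection; these are the same.
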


\begin{proof}
  The first inequality is Lemma~\ref{surj.lem}, {\bf 4}.
  We prove the second inequality.
  Let $m=\surj_N(M\oplus M')$ and $n=\mu_R(M')$.
  There is a surjective map $\varphi:M\oplus M'\rightarrow N^{\oplus m}$.
  Let $N_i=N$ be the $i$th summand of $N^{\oplus m}$.
  Let $\bar ?$ denote the functor $R/\fm$.
  Set $V=\bar N^{\oplus m}$, $W=\bar\varphi(\bar M)$, and $W'=\bar\varphi(\bar M')$.
  Then by Lemma~\ref{easy-vector-space2.lem}, there exists some
  index set $I\subset \{1,2,\ldots,m\}$ such that $\#I\leq n$ and
  $W+\sum_{i\in I}\bar N_i=V$.
  By Nakayama's lemma, $\varphi(M)+\sum_{i\in I}N_i=N^{\oplus m}$.
  This shows that
  \[
  M\hookrightarrow M\oplus M'\xrightarrow \varphi
  N^{\oplus m}\rightarrow N^{\oplus m}/\sum_{i\in I}N_i\cong N^{\oplus (m-\#I)}
  \]
  is surjective.
  Hence $\surj_N(M)\geq m-\#I \geq m-n$, and the result follows.
\end{proof}

\paragraph
Let $(R,\fm)$ be a Henselian local ring.
Let $\C:=\md R$.
As in \cite{SH}, we define
\[
[\C]:=(\bigoplus_{M\in\C}\ZZ\cdot M)
/(M-M_1-M_2\mid M\cong M_1\oplus M_2),
\]
and $[\C]_\RR:=\RR\otimes_\ZZ[\C]$.
In \cite{SH},
$[\C]_\RR$ is also written as $\Theta^{\wedge}(R)$ or $\Theta(R)$ (considering that
$R$ is trivially graded).
In this paper, we write it as $\Theta(R)$.
For $M\in\C$, we denote by $[M]$ the class of $M$ in $\Theta(R)$.
For 
an isomorphism class $N$ of modules, $[N]$ is a well-defined element of $\Theta(R)$.
Let $\Ind(R)$ denote the set of isomorphism classes of indecomposable modules in $\C$.
The set $[\Ind(R)]:=\{[M]\mid M\in\Ind(R)\}$ is an $\RR$-basis of $\Theta(R)=[\C]_\RR$.
So $\alpha\in\Theta(R)$ can be written $\alpha=\sum_{M\in\Ind(R)}c_M[M]$ with $c_M\in\RR$ uniquely.
We say that $\alpha\geq 0$ if $c_M\geq 0$ for any $M\in\Ind(R)$.
For $\alpha,\beta\in\Theta(R)$, we define $\alpha\geq \beta$ if $\alpha-\beta\geq 0$.
This gives an ordering on $\Theta(R)$.

\paragraph
For $\alpha=\sum_{M\in\Ind(R)}c_M[M]\in\Theta(R)$, we define
\[
\langle \alpha\rangle:=\sum_{M\in\Ind(R)}\max(0,\floor{c_M})[M].
\]
So there exists some $M_\alpha\in\C$, unique up to isomorphisms, such that $\langle\alpha\rangle=[M_\alpha]$.
For $N\in\md R$ with $N\neq 0$, we define $\surj_N\alpha$ to be $\surj_N M_\alpha$.

\paragraph
For $\alpha=\sum_{M\in\Ind(R)}c_M M
\in\Theta(R)$, we define $\supp \alpha=\{M\in\Ind(R)\mid c_M>0\}$.
We define $Y(\alpha)=\bigoplus_{W\in\supp\alpha}W$ and $\nu(\alpha):=\mu_R(Y(\alpha))$.

\begin{lemma}\label{alpha-surj.lem}
  Let $N\in\md R$, $N\neq 0$, and $\alpha,\beta \in \Theta(R)$.
  \begin{enumerate}
  \item[\bf 1] If $\alpha,\beta\geq 0$, then
    $0\leq \surj_N\alpha\leq \surj_N(\alpha+\beta)-\surj_N\beta$.
  \item[\bf 2]
   $\abs{\surj_N\alpha-\surj_N\beta}\leq \norm{\alpha-\beta}+\nu(\inf\{\alpha,\beta\})$.
  \end{enumerate}
\end{lemma}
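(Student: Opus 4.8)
The lemma is essentially formal once Lemma~\ref{surj.lem} and Lemma~\ref{surj-mu.lem} are available, and the plan is to reduce everything to those. I will use two trivial facts throughout: (a) $\surj_N$ is monotone under direct summands, i.e.\ if $Q\cong P\oplus P'$ then $\surj_N P\le\surj_N Q$ (compose a surjection $P\to N^{\oplus n}$ with the projection $Q\to P$); and (b) since $R$ is Henselian local, Krull--Schmidt holds, so if the multiplicity of every $M\in\Ind(R)$ in $Q$ is at least that in $Q'$, then $Q'$ is a direct summand of $Q$, whence $\surj_N Q'\le\surj_N Q$. Recall also that for $\delta\ge 0$ one has $M_\delta\cong\bigoplus_M M^{\oplus\floor{\delta_M}}$, so $\mu_R(M_\delta)=\sum_M\floor{\delta_M}\mu_R(M)\le\sum_M\delta_M\mu_R(M)=\norm\delta$, using the norm $\norm{\sum_M c_M[M]}=\sum_M\abs{c_M}\mu_R(M)$ on $\Theta(R)$.

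\textbf{Part 1.} Writing $\alpha=\sum_M c_M[M]$, $\beta=\sum_M d_M[M]$ with $c_M,d_M\ge 0$, the inequality $\floor{c_M+d_M}\ge\floor{c_M}+\floor{d_M}$ shows that $M_\alpha\oplus M_\beta$ is a direct summand of $M_{\alpha+\beta}$. Hence, by Lemma~\ref{surj.lem}, {\bf 4}, and fact (a),
\[
\surj_N\alpha+\surj_N\beta\ \le\ \surj_N(M_\alpha\oplus M_\beta)\ \le\ \surj_N M_{\alpha+\beta}\ =\ \surj_N(\alpha+\beta);
\]
rearranging and noting $\surj_N\alpha\ge 0$ gives {\bf 1}. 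This step is routine.

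\textbf{Part 2.} The asserted bound is symmetric in $\alpha,\beta$, so I may assume $\surj_N\alpha\ge\surj_N\beta$. Put $\gamma=\inf\{\alpha,\beta\}$ (coefficientwise minimum), $\alpha'=\alpha-\gamma\ge 0$, $\beta'=\beta-\gamma\ge 0$, and let $\bar\gamma=\sum_M\max(0,\gamma_M)[M]\ge 0$ be the positive truncation of $\gamma$; since $\alpha'_M+\beta'_M=\abs{c_M-d_M}$ we get $\norm{\alpha'}\le\norm{\alpha'}+\norm{\beta'}=\norm{\alpha-\beta}$. The three facts to verify, each by a short coefficientwise computation with $\floor{\,\cdot\,}$ and $\max(0,\,\cdot\,)$, are:
\begin{enumerate}
\item[(i)] $\langle\alpha\rangle\le\langle\bar\gamma+\alpha'\rangle$, so $M_\alpha$ is a summand of $M_{\bar\gamma+\alpha'}$ and $\surj_N\alpha\le\surj_N(\bar\gamma+\alpha')$;
\item[(ii)] $\langle\bar\gamma\rangle\le\langle\beta\rangle$, so $\surj_N\bar\gamma\le\surj_N\beta$;
\item[(iii)] $M_{\bar\gamma+\alpha'}\cong M_{\bar\gamma}\oplus M_{\alpha'}\oplus P$, where $P$ has multiplicities $e_M=\floor{\bar\gamma_M+\alpha'_M}-\floor{\bar\gamma_M}-\floor{\alpha'_M}\in\{0,1\}$, and $e_M=1$ forces $\bar\gamma_M\notin\ZZ$, hence $\bar\gamma_M>0$, hence $M\in\supp\gamma$; therefore $\mu_R(P)\le\sum_{M\in\supp\gamma}\mu_R(M)=\mu_R(Y(\gamma))=\nu(\inf\{\alpha,\beta\})$.
\end{enumerate}
Granting these, apply Lemma~\ref{surj-mu.lem} twice --- once with $(M,M')=(M_{\bar\gamma}\oplus M_{\alpha'},\,P)$, once with $(M,M')=(M_{\bar\gamma},\,M_{\alpha'})$ --- to get $\surj_N(\bar\gamma+\alpha')\le\surj_N\bar\gamma+\mu_R(M_{\alpha'})+\mu_R(P)$. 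Combining with (i), (ii), (iii) and $\mu_R(M_{\alpha'})\le\norm{\alpha'}\le\norm{\alpha-\beta}$,
\[
\surj_N\alpha\ \le\ \surj_N(\bar\gamma+\alpha')\ \le\ \surj_N\beta+\norm{\alpha-\beta}+\nu(\inf\{\alpha,\beta\}),
\]
i.e.\ $\abs{\surj_N\alpha-\surj_N\beta}\le\norm{\alpha-\beta}+\nu(\inf\{\alpha,\beta\})$ under the standing assumption, hence in general.

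The only real obstacle is bookkeeping, not ideas: $\gamma=\inf\{\alpha,\beta\}$ need not be $\ge 0$, so it cannot be fed directly into {\bf 1} or Lemma~\ref{surj-mu.lem}. Passing to the positive truncation $\bar\gamma$ repairs this, at the cost of inequalities (i) and (ii) --- which is exactly where the $\max(0,\floor{\,\cdot\,})$ in the definition of $\langle\,\cdot\,\rangle$ does the work. The one quantitative estimate beyond the two cited lemmas is $\floor{x+y}-\floor x-\floor y\in\{0,1\}$ for $x,y\ge 0$, which confines the ``rounding defect'' module $P$ to the indecomposables of $\supp\gamma$ and so bounds $\mu_R(P)$ by $\nu(\inf\{\alpha,\beta\})$.
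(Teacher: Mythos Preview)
Your proof is correct. Part~{\bf 1} is identical to the paper's argument. For Part~{\bf 2} you use the same two ingredients as the paper---Lemma~\ref{surj-mu.lem} and the elementary floor estimate $\floor{x+y}-\floor{x}-\floor{y}\in\{0,1\}$---but organise the bookkeeping differently. The paper first replaces $\alpha,\beta$ by $\sup\{\alpha,0\},\sup\{\beta,0\}$ (which leaves $\surj_N$ unchanged and does not increase the right-hand side) and then by $\sup\{\alpha,\beta\},\inf\{\alpha,\beta\}$ (which does not decrease the left-hand side and leaves the right-hand side unchanged), thereby reducing to the case $\alpha\geq\beta\geq 0$; a single application of Lemma~\ref{surj-mu.lem} with the inequality $\langle\alpha\rangle-\langle\beta\rangle\leq(\alpha-\beta)+[Y(\beta)]$ then finishes. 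Your route avoids these preliminary reductions by working directly with the positive truncation $\bar\gamma$ of $\gamma=\inf\{\alpha,\beta\}$, at the cost of checking (i)--(iii) and applying Lemma~\ref{surj-mu.lem} twice. The paper's version is a bit more economical, but the substance is the same.
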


\begin{proof}
  {\bf 1}. As $\alpha,\beta\geq 0$, we have that
  $\langle \alpha\rangle+\langle \beta\rangle\leq\langle\alpha+\beta\rangle$.
  So by Lemma~\ref{surj.lem}, {\bf 4}, 
  $\surj_N\alpha+\surj_N\beta \leq \surj_N(\langle\alpha+\beta\rangle)\leq
  \surj_N(\alpha+\beta)$.

  {\bf 2}.
  Replacing $\alpha$ by $\sup\{\alpha,0\}$ and $\beta$ by $\sup\{\beta,0\}$,
  we may assume that $\alpha,\beta\geq 0$.
  Moreover, replacing $\alpha$ by $\sup\{\alpha,\beta\}$ and $\beta$ by
  $\inf\{\alpha,\beta\}$, we may assume that $\alpha\geq \beta$.
  As we have
  $\langle\alpha\rangle-\langle\beta\rangle\leq \alpha-\beta+[Y(\beta)]$,
  by Lemma~\ref{surj-mu.lem} we have that
  \begin{multline*}
  \surj_N\alpha-\surj_N\beta\leq\norm{\langle\alpha\rangle-\langle\beta\rangle}
  \leq
  \norm{\alpha-\beta+[Y(\beta)]}\\
  \leq\norm{\alpha-\beta}+\norm{[Y(\beta)]}
  =\norm{\alpha-\beta}+\nu(\beta).
  \end{multline*}
  This is what we wanted to prove.
  \end{proof}

\begin{lemma}
The limit 
  \[
  \lim_{t\rightarrow \infty}\frac{1}{t}\surj_N(t\alpha)
  \]
  exists for $N\in\md R$, $N\neq 0$ and $\alpha\in\Theta(R)$.
\end{lemma}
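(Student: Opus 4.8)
The plan is to view $f(n):=\surj_N(n\alpha)$ as a superadditive function of $n$ and to conclude by Fekete's lemma, after first reducing to the case $\alpha\geq 0$. For this reduction, write $\alpha=\sum_{M\in\Ind(R)}c_M[M]$ and put $\alpha_+:=\sup\{\alpha,0\}=\sum_{c_M>0}c_M[M]\geq 0$. For every real $t>0$ a basis element with $c_M\leq 0$ contributes $\max(0,\floor{tc_M})=0$ to $\langle t\alpha\rangle$, so $\langle t\alpha\rangle=\sum_{c_M>0}\floor{tc_M}[M]=\langle t\alpha_+\rangle$; hence $M_{t\alpha}\cong M_{t\alpha_+}$ and $\surj_N(t\alpha)=\surj_N(t\alpha_+)$. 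Thus I may replace $\alpha$ by $\alpha_+$ and assume $\alpha\geq 0$ henceforth.

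For the integer case, let $f(n):=\surj_N(n\alpha)$ for $n\geq 1$. Since $m\alpha,n\alpha\geq 0$, Lemma~\ref{alpha-surj.lem}, {\bf 1} applied to the pair $(m\alpha,n\alpha)$ gives $f(m)+f(n)\leq\surj_N((m+n)\alpha)=f(m+n)$, so $f$ is superadditive. On the other hand, $f(n)=\surj_N M_{n\alpha}\leq\mu_R(M_{n\alpha})/\mu_R(N)$ by Lemma~\ref{surj.lem}, {\bf 2} (using $N\neq 0$), and $\mu_R(M_{n\alpha})=\sum_{c_M>0}\floor{nc_M}\mu_R(M)\leq n\sum_{c_M>0}c_M\mu_R(M)$, so $f(n)/n$ is bounded above. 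By Fekete's lemma — equivalently, by the very argument used to prove Lemma~\ref{asn.lem} — the limit $L:=\lim_{n\to\infty}f(n)/n=\sup_{n\geq1}f(n)/n$ exists and is finite. This already proves the statement when $t$ ranges over the positive integers.

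To pass to real $t\to\infty$, I would compare $\surj_N(t\alpha)$ with $\surj_N(\floor t\alpha)$. For $t\geq 1$ put $n=\floor t\geq 1$; since $\alpha\geq 0$ and $t\geq n$ we have $n\alpha\leq t\alpha$, so $\inf\{t\alpha,n\alpha\}=n\alpha$ and $\nu(\inf\{t\alpha,n\alpha\})=\nu(\alpha)$ because $\supp(n\alpha)=\supp\alpha$. Lemma~\ref{alpha-surj.lem}, {\bf 2} then gives
\[
\abs{\surj_N(t\alpha)-\surj_N(n\alpha)}\leq\norm{(t-n)\alpha}+\nu(\alpha)\leq\norm{\alpha}+\nu(\alpha),
\]
a bound independent of $t$. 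Dividing by $t$ and letting $t\to\infty$ (so that $n\to\infty$ and $n/t\to 1$) yields $\frac1t\surj_N(t\alpha)\to L$. I expect the only delicate point to be the bookkeeping with $\langle\,\cdot\,\rangle$, $\supp$ and $\nu$ in the two reductions; once $\alpha\geq 0$ is secured, the result is just superadditivity together with the boundedness already supplied by Lemma~\ref{surj.lem}.
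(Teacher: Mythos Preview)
Your proof is correct, but it takes a different and in fact more direct route than the paper's. The paper, after the same reduction to $\alpha\geq 0$, approximates $\alpha$ by a rational class $n^{-1}[W]$ with $W\in\md R$, invokes the already-established module case (Lemma~\ref{asn.lem}) to know that $\asn_N W$ exists, and then runs an explicit $\varepsilon$-Cauchy estimate showing that $t^{-1}\surj_N(t\alpha)$ stays close to $n^{-1}\asn_N W$. By contrast, you observe that Lemma~\ref{alpha-surj.lem}, {\bf 1} makes $n\mapsto\surj_N(n\alpha)$ superadditive outright, so Fekete's lemma gives the integer limit immediately, and then Lemma~\ref{alpha-surj.lem}, {\bf 2} supplies a $t$-independent bound $\lVert\alpha\rVert+\nu(\alpha)$ on $\lvert\surj_N(t\alpha)-\surj_N(\lfloor t\rfloor\alpha)\rvert$, which handles the passage to real $t$. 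Your argument is shorter and avoids both the rational approximation and the appeal to the earlier module lemma; the paper's approach has the mild conceptual advantage of displaying $\asn_N(\alpha)$ explicitly as a limit of the module values $n^{-1}\asn_N W$, foreshadowing the continuity statement proved next.
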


We denote the limit by $\asn_N(\alpha)$.

\begin{proof}
  Replacing $\alpha$ by $\sup\{0,\alpha\}$, we may assume that $\alpha\geq 0$.
  Let $\varepsilon>0$.
  We can take $W\in\md R$ and an integer $n>0$ such that $\alpha-n^{-1}[W]\geq 0$ and 
  $\norm{\alpha-n^{-1}[W]}<\varepsilon/8$.
  As $\asn_N W$ exists, there exists some $r_0\geq 1$ such that for any $r\geq r_0$,
  $\abs{\nsurj_N(W;r)-\asn_N W}<n\varepsilon/8$.
  Set $R:=\max\{r_0n,16\mu_R(W)/\varepsilon,8n\norm{\alpha}/\varepsilon\}$.
  Let $t>R$.
  Let $r:=\floor{t/n}$.
  Then $0\leq t-rn < n$ and $r\geq r_0$.
  We have
  \begin{multline*}
    \abs{t^{-1}\surj_N(t\alpha)- n^{-1}\asn_N W}
      \leq
  t^{-1}\abs{\surj_N(t\alpha)-\surj_N(W^{\oplus r})}\\
  +((rn)^{-1}-t^{-1})\surj_N(W^{\oplus r})
  +\abs{(rn)^{-1}\surj_N(W^{\oplus r})-n^{-1}\asn_N W}
  \\
  < t^{-1}\norm{t\alpha-r[W]}
  +
  t^{-1}\mu_R(W)
    +
  (rt)^{-1}\mu_R(W^{\oplus r})
    +
   \varepsilon/8
   \\
   \leq 
   (n/t)\norm{\alpha}+(nr/t)\norm{\alpha-n^{-1}[W]}+\varepsilon/16+\varepsilon/16+\varepsilon/8
   \\
   <\varepsilon/8+\varepsilon/8+\varepsilon/16+\varepsilon/16+\varepsilon/8=\varepsilon/2.
  \end{multline*}
  So for $t_1,t_2>R$,
  \[
  \abs{t_1^{-1}\surj_N(t_1\alpha)-t_2^{-1}\surj_N(t_2\alpha)}<\varepsilon,
  \]
  and $\lim_{t\rightarrow\infty}t^{-1}\surj_N(t\alpha)$ exists, as desired.
\end{proof}

\begin{lemma}
  Let $\alpha,\beta\in\Theta(R)$ and $N\in\md R$ with $N\neq 0$.
  \begin{enumerate}
  \item[\bf 1] For $k\geq 0$, we have $\asn_N(k\alpha)=k\asn_N(\alpha)$.
  \item[\bf 2] For $k\geq 0$,
    $0\leq \surj_N(k\alpha)\leq k\asn_N(\alpha)\leq k\norm{\alpha}/\mu_R(N)$.
  \item[\bf 3] If $\alpha,\beta\geq 0$, then
    $\asn_N(\alpha+\beta)\geq \asn_N(\alpha)+\asn_N(\beta)$.
  \item[\bf 4] $\abs{\asn_N(\alpha)-\asn_N(\beta)}\leq \norm{\alpha-\beta}$.
    \item[\bf 5] $\asn_N$ is continuous.
  \end{enumerate}
\end{lemma}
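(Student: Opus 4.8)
The plan is to establish each of the five assertions by reducing them, via scaling, to the already-proved facts about $\nsurj_N$ and the estimate in Lemma~\ref{alpha-surj.lem}, {\bf 2}.

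For {\bf 1}, I would write out $\asn_N(k\alpha) = \lim_{t\to\infty} t^{-1}\surj_N(tk\alpha)$ and recognize this as $k$ times the limit of $(tk)^{-1}\surj_N((tk)\alpha)$ along the subsequence of multiples of $k$; since the full limit $\asn_N(\alpha)$ exists by the previous lemma, this subsequence has the same limit, giving $\asn_N(k\alpha)=k\asn_N(\alpha)$. For {\bf 2}, the inequality $0\le \surj_N(k\alpha)$ is immediate from the definition of $\surj_N$; the bound $\surj_N(k\alpha)\le k\asn_N(\alpha)$ should follow by the monotonicity pattern already used (apply Lemma~\ref{alpha-surj.lem}, {\bf 1}, to $\alpha,\beta,\dots$ summed $k$ times, or equivalently note $\surj_N(k\alpha)\le \nsurj$-type quantities that increase to the limit), and the final bound $k\asn_N(\alpha)\le k\norm{\alpha}/\mu_R(N)$ comes from Lemma~\ref{alpha-surj.lem}, {\bf 2}, applied with $\beta=0$: $\surj_N(t\alpha)\le \norm{t\alpha} + \nu(0) = t\norm{\alpha}$, wait — more carefully, $\surj_N(t\alpha)\le \mu_R(M_{t\alpha})/\mu_R(N)\le \norm{t\alpha}/\mu_R(N)$ by Lemma~\ref{surj.lem}, {\bf 2}, so dividing by $t$ and taking the limit gives the claim.

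For {\bf 3}, superadditivity, I would use Lemma~\ref{alpha-surj.lem}, {\bf 1}: for $\alpha,\beta\ge 0$ we have $\surj_N(t\alpha)+\surj_N(t\beta)\le \surj_N(t\alpha+t\beta) = \surj_N(t(\alpha+\beta))$; dividing by $t$ and passing to the limit (all three limits exist) yields $\asn_N(\alpha)+\asn_N(\beta)\le \asn_N(\alpha+\beta)$. For {\bf 4}, the Lipschitz estimate, I would take the limit in Lemma~\ref{alpha-surj.lem}, {\bf 2}, applied to $t\alpha$ and $t\beta$: $\abs{\surj_N(t\alpha)-\surj_N(t\beta)}\le \norm{t\alpha-t\beta}+\nu(\inf\{t\alpha,t\beta\}) = t\norm{\alpha-\beta}+\nu(\inf\{t\alpha,t\beta\})$, and since $\inf\{t\alpha,t\beta\}=t\inf\{\alpha,\beta\}$ has the \emph{same support} as $\inf\{\alpha,\beta\}$ (independent of $t\ge 1$), the term $\nu(\inf\{t\alpha,t\beta\})=\nu(\inf\{\alpha,\beta\})$ is bounded; dividing by $t$ and letting $t\to\infty$ kills it, leaving $\abs{\asn_N(\alpha)-\asn_N(\beta)}\le \norm{\alpha-\beta}$. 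Finally {\bf 5} is immediate from {\bf 4}, since a Lipschitz function is continuous — though one should be slightly careful that $\asn_N$ as defined is only \emph{a priori} defined by the limit formula on all of $\Theta(R)$, which the previous lemma guarantees, so {\bf 4} extends the Lipschitz bound to the whole space and continuity follows.

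The only mild subtlety — the "main obstacle," though it is not much of one — is the bookkeeping in {\bf 4}: one must observe that scaling $\alpha\mapsto t\alpha$ does not change the support of the infimum (only the coefficients scale), so $\nu(\inf\{t\alpha,t\beta\})$ stays bounded as $t\to\infty$ and therefore contributes nothing after dividing by $t$. Everything else is a routine passage to the limit in inequalities already established.
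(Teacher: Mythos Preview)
Your proposal is correct and follows essentially the same approach as the paper: each part is obtained by scaling, applying the corresponding inequality from Lemma~\ref{alpha-surj.lem} (or Lemma~\ref{surj.lem} for the norm bound), dividing by $t$, and passing to the limit. Your observation in {\bf 4} that $\nu(\inf\{t\alpha,t\beta\})=\nu(\inf\{\alpha,\beta\})$ is constant in $t>0$ is exactly the point the paper uses; the only place your write-up is slightly looser than the paper's is the middle inequality in {\bf 2}, where the paper cleanly reduces to $k=1$ via {\bf 1} and then uses $n\langle\alpha\rangle\le\langle n\alpha\rangle$ to get $\surj_N(\alpha)\le n^{-1}\surj_N(n\alpha)$ before taking the limit.
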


  \begin{proof}
    {\bf 1}.
    If $k=0$, then both-hand sides are zero, and the assertion is clear.
    So we may assume that $k>0$.
    Then
    \[
    \asn_N(k\alpha)=\lim_{t\rightarrow\infty}\frac{1}{t}\surj(tk\alpha)
    =k\lim_{t\rightarrow\infty}\frac{1}{tk}\surj(tk\alpha)=k\asn_N(\alpha).
    \]

    {\bf 2}.
    We may assume that $k>0$.
    By {\bf 1}, replacing $k\alpha$ by $\alpha$, we may assume that $k=1$.
    Replacing $\alpha$ by $\sup\{0,\alpha\}$, we may assume that $\alpha\geq 0$.
    For $n\geq 0$, $n\langle \alpha\rangle \leq \langle n\alpha\rangle$.
    Hence, $n\surj_N(\alpha)\leq \surj_N(n\langle\alpha\rangle)
    \leq\surj_N(n\alpha)$.
    So $\surj_N(\alpha)\leq n^{-1}\surj_N(n\alpha)$.
    Passing to the limit, $\surj_N(\alpha)\leq \asn_N(\alpha)$.
    Similarly,
    \[
    \frac{1}{n}\surj_N(n\alpha)\leq \frac{\norm{\langle n\alpha\rangle}}{n\mu_R(N)}
    \leq \frac{\norm{n\alpha}}{n\mu_R(N)}=\frac{\norm{\alpha}}{\mu_R(N)}.
    \]
    Passing to the limit,
    $\asn_N(\alpha)\leq \frac{\norm{\alpha}}{\mu_R(N)}$, as desired.

    {\bf 3}.
    By Lemma~\ref{alpha-surj.lem}, {\bf 1},
    for $t>0$,
    \[
    \frac{1}{t}\surj_N(t\alpha)+\frac{1}{t}\surj_N(t\beta)
    \leq
    \frac{1}{t}\surj_N(t(\alpha+\beta)).
    \]
    Passing to the limit, $\asn_N(\alpha)+\asn_N(\beta)\leq \asn_N(\alpha+\beta)$.

    {\bf 4}.
    By Lemma~\ref{alpha-surj.lem}, {\bf 2},
    \begin{multline*}
    \abs{\frac{1}{t}\surj_N(t\alpha)-\frac{1}{t}\surj_N(t\beta)}
    \leq \frac{1}{t}(\norm{t(\alpha-\beta)}+\nu(\inf\{t\alpha,t\beta\}))\\
    = \norm{\alpha-\beta}+\nu(\inf\{\alpha,\beta\})/t.
    \end{multline*}
    Passing to the limit, $\abs{\asn_N(\alpha)-\asn_N(\beta)}\leq \norm{\alpha-\beta}$,
    as desired.

    {\bf 5} is an immediate consequence of {\bf 4}.
  \end{proof}

  \section{Sannai's dual $F$-signature}\label{dual-F-signature.sec}

  \paragraph
  In this section, let $p$ be a prime number, and
  $(R,\fm,k)$ be an $F$-finite local ring of characteristic $p$ of dimension $d$.
  Let $\fd =\log_p[k:k^p]$, and $\delta=d+\fd$.

  \paragraph
  In \cite{Sannai}, for $M\in\md R$, Sannai defined
  the dual $F$-signature of $M$ by
  \[
  s_R(M)=s(M):=\limsup_{e\rightarrow\infty}\frac{\surj_M({}^eM)}{p^{\delta e}}.
  \]

  $s(R)$ is the (usual) $F$-signature \cite{HL}, which is closely related to
  the strong $F$-regularity of $R$ \cite{AL}.
  While $s(\omega_R)$ measures the $F$-rationality of $R$, provided $R$ is
  Cohen--Macaulay.

  \begin{theorem}[{\cite[(3.16)]{Sannai}}]\label{Sannai.thm}
    $R$ is $F$-rational if and only if $R$ is Cohen--Macaulay and $s(\omega_R)>0$.
  \end{theorem}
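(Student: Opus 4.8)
This is Sannai's theorem; here is the plan I would follow. The idea is to reduce to the complete case, to Matlis-dualize the dual $F$-signature of $\omega_R$ into a statement about the Frobenius action on $H^d_\fm(R)$, and then to match that statement with the cohomological characterization of $F$-rationality.

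\emph{Reduction and set-up.} Both sides are unchanged under completion: Cohen--Macaulayness and the existence and formation of canonical modules are preserved, $F$-rationality passes to and from $\widehat R$ (and an excellent $F$-rational ring is Cohen--Macaulay, so that hypothesis is free for the ``only if'' direction), and $\surj_{\omega_R}({}^e\omega_R)=\surj_{\omega_{\widehat R}}({}^e\omega_{\widehat R})$ by faithfully flat descent of surjections of finite modules (Lemma~\ref{surj.lem}, {\bf 1} gives ``$\leq$'', and the reverse holds because source and target are extended from finite $R$-modules). So assume $(R,\fm,k)$ complete Cohen--Macaulay of dimension $d$ with canonical module $\omega:=\omega_R$, and write $H:=H^d_\fm(R)$ and $(-)^\vee:=\Hom_R(-,E_R(k))$, so that $H^\vee\cong\omega$ and $\omega^\vee\cong H$ by local duality.

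\emph{The Matlis-dual reformulation.} Applying $(-)^\vee$ turns surjections ${}^e\omega\twoheadrightarrow\omega^{\oplus n}$ into injections $H^{\oplus n}\hookrightarrow({}^e\omega)^\vee$ and conversely; since $R$ is $F$-finite one has $({}^eM)^\vee\cong{}^e(M^\vee)$ for finite $M$, so $({}^e\omega)^\vee\cong{}^eH$ and therefore
\[
\surj_{\omega}({}^e\omega)=\max\{\,n\geq 0 : H^{\oplus n}\hookrightarrow{}^eH\ \text{as $R$-modules}\,\}.
\]
Dually, using ${}^e\omega\cong\Hom_R(F^e_*R,\omega)$ (valid since $R$ is Cohen--Macaulay) together with the duality $\Hom_R(-,\omega)$ on maximal Cohen--Macaulay modules, this same number equals $\max\{\,n : R^{\oplus n}\hookrightarrow F^e_*R\ \text{with}\ F^e_*R/R^{\oplus n}\ \text{maximal Cohen--Macaulay}\,\}$. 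Hence
\[
s(\omega_R)=\limsup_{e\to\infty}\frac{1}{p^{\delta e}}\,\max\{\,n : R^{\oplus n}\hookrightarrow F^e_*R\ \text{with maximal Cohen--Macaulay cokernel}\,\}.
\]

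\emph{Matching with $F$-rationality.} It remains to show this $\limsup$ is positive if and only if $0^*_H=0$; by Smith's theorem the latter is equivalent, for an excellent Cohen--Macaulay ring, to $F$-rationality, equivalently to $\tau_{\mathrm{par}}(\omega)=\omega$ for the parameter test submodule. The easier implication is ``$0^*_H\neq 0\Rightarrow s(\omega_R)=0$'', which yields the ``if'' direction of the theorem: composing with $\omega^{\oplus n}\twoheadrightarrow(\omega/\tau_{\mathrm{par}}(\omega))^{\oplus n}$ and using a completely stable test element $c$, one shows that every surjection ${}^e\omega\twoheadrightarrow\omega^{\oplus n}$ is, after multiplication by $c$, controlled by $\Hom_R(F^e_*R,\tau_{\mathrm{par}}(\omega))$, whose minimal number of generators is $o(p^{\delta e})$ precisely because $\tau_{\mathrm{par}}(\omega)\subsetneq\omega$ forces the relevant colon submodules to grow strictly below the Hilbert--Kunz rate; hence $\surj_{\omega}({}^e\omega)=o(p^{\delta e})$. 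The converse is the main obstacle. Assume $R$ is $F$-rational, so $R$ is a normal Cohen--Macaulay domain, $0^*_H=0$, and $H=H^d_\fm(R)$ is $F$-simple (any nonzero element generates it as a module over the Frobenius skew polynomial ring $R\{F\}$). One must produce, for every $e$, a free submodule $R^{\oplus n}\hookrightarrow F^e_*R$ with maximal Cohen--Macaulay cokernel and $n\geq c_0\,p^{\delta e}$ for a fixed $c_0>0$: normality makes $F^e_*R$ locally free away from a locus of codimension $\geq 2$, so free directions are abundant generically, and $F$-rationality is what lets enough of them extend to honest free submodules with maximal Cohen--Macaulay quotient. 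Turning ``enough'' into the quantitative bound $\gtrsim c_0\,p^{\delta e}$, rather than merely $\geq 1$, is where the real work lies: I would extract it from the $F$-simplicity of $H^d_\fm(R)$ combined with Hilbert--Kunz asymptotics guaranteeing that the lengths of the relevant subquotients grow at the full rate $p^{\delta e}$. This lower bound is the crux, and is what Sannai's argument achieves.
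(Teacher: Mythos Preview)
The paper does not contain a proof of this statement: Theorem~\ref{Sannai.thm} is quoted verbatim from \cite[(3.16)]{Sannai} and used as a black box (it is invoked later in the proof of Theorem~\ref{main-example.thm}, {\bf 3}). There is therefore nothing in the paper to compare your argument against.

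As a separate remark on your sketch itself: the overall architecture---reduce to the complete case, Matlis-dualize $\surj_{\omega}({}^e\omega)$ into an injective statement about $H^d_\fm(R)$, and reinterpret that as a count of free submodules $R^{\oplus n}\hookrightarrow F^e_*R$ with maximal Cohen--Macaulay cokernel---is indeed Sannai's line of attack. But you have, by your own admission, not supplied the decisive step in the ``only if'' direction: you need a uniform constant $c_0>0$ with $n\geq c_0\,p^{\delta e}$, and appealing to ``$F$-simplicity of $H$ combined with Hilbert--Kunz asymptotics'' is a slogan, not an argument. Likewise, your treatment of the ``if'' direction is vague: the claim that $\mu_R\bigl(\Hom_R(F^e_*R,\tau_{\mathrm{par}}(\omega))\bigr)=o(p^{\delta e})$ whenever $\tau_{\mathrm{par}}(\omega)\subsetneq\omega$ is not obvious and is not what one actually proves---one rather bounds $\surj_\omega({}^e\omega)$ by the number of generators of a suitable quotient of ${}^e\omega$ that collapses modulo the parameter test submodule. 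So what you have written is a plausible outline that correctly identifies where the difficulty lies, but it is not a proof; for the actual argument you must consult \cite{Sannai}.
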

  
  Now we connect the Frobenius
  limit defined in \cite{SH} with dual $F$-signature.

  \begin{theorem}
    Let $R$ be Henselian, and $M\in\md R$.
    Assume that the Frobenius limit
\[
\FL([M])=\lim_{e\rightarrow \infty}\frac{1}{p^{\delta e}}[{}^eM]\in\Theta(R)
\]
exists.
    Then
    \[
    s_R(M)=\lim_{e\rightarrow\infty}\frac{\surj_M({}^eM)}{p^{\delta e}}=\asn_M(\FL([M])).
    \]
  \end{theorem}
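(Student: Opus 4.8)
The key is to recognize that the dual $F$-signature
$s_R(M)=\limsup_e \surj_M({}^eM)/p^{\delta e}$ is, up to the rescaling by $p^{\delta e}$, nothing but an asymptotic version of the surjective number, and that the class $[{}^eM]\in\Theta(R)$ approximates $p^{\delta e}\FL([M])$. So the first step is to observe that, since $R$ is Henselian, ${}^eM$ is uniquely a direct sum of indecomposables and hence $\surj_M({}^eM)=\surj_M([{}^eM])$ in the notation of the paragraph on $\langle\alpha\rangle$; in particular $\langle [{}^eM]\rangle=[{}^eM]$. Then I would compare $\surj_M([{}^eM])$ with $\surj_M(p^{\delta e}\FL([M]))$ using Lemma~\ref{alpha-surj.lem}, {\bf 2}, which gives
\[
\bigl|\surj_M([{}^eM])-\surj_M(p^{\delta e}\FL([M]))\bigr|
\leq \norm{[{}^eM]-p^{\delta e}\FL([M])}+\nu\bigl(\inf\{[{}^eM],p^{\delta e}\FL([M])\}\bigr).
\]
The first term on the right is $p^{\delta e}\norm{p^{-\delta e}[{}^eM]-\FL([M])}=o(p^{\delta e})$ by the assumed existence of the Frobenius limit.

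**The $\nu$ term.** The main obstacle is controlling the term $\nu(\inf\{[{}^eM],p^{\delta e}\FL([M])\})$ and showing it is also $o(p^{\delta e})$. Here is where one must use more than the bare definition of $\FL$: the point is that only finitely many indecomposables $W$ occur with positive coefficient in $\FL([M])$, and for each such $W$ the coefficient of $W$ in $p^{-\delta e}[{}^eM]$ converges to a positive number, hence the coefficient of $W$ in $[{}^eM]$ itself grows like $p^{\delta e}$; so $\inf\{[{}^eM],p^{\delta e}\FL([M])\}$ is, for large $e$, supported on $\supp\FL([M])$, a \emph{fixed} finite set $\Gamma$ of indecomposables. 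Therefore $Y(\inf\{\dots\})$ is a direct sum of copies of the modules in $\Gamma$, but with multiplicities bounded — in fact $Y(\alpha)=\bigoplus_{W\in\supp\alpha}W$ has no multiplicities at all by the definition of $Y$, so $\nu(\inf\{[{}^eM],p^{\delta e}\FL([M])\})\leq \mu_R(\bigoplus_{W\in\Gamma}W)$, a constant independent of $e$. (If one prefers not to pass to large $e$, one notes $\nu(\inf\{\alpha,\beta\})\le \nu(\beta)$ when $\beta\ge 0$, and $\nu(p^{\delta e}\FL([M]))=\nu(\FL([M]))$ since $\supp$ is unchanged by positive scaling; this already gives a bound independent of $e$.) Dividing the displayed inequality by $p^{\delta e}$ and letting $e\to\infty$ then shows
\[
\lim_{e\to\infty}\frac{\surj_M({}^eM)}{p^{\delta e}}
=\lim_{e\to\infty}\frac{\surj_M(p^{\delta e}\FL([M]))}{p^{\delta e}},
\]
provided the right-hand limit exists; and in particular the $\limsup$ in Sannai's definition is a genuine limit.

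**Identifying the right-hand side.** It remains to identify $\lim_{e\to\infty}p^{-\delta e}\surj_M(p^{\delta e}\FL([M]))$ with $\asn_M(\FL([M]))=\lim_{t\to\infty}t^{-1}\surj_M(t\FL([M]))$. This is immediate: the sequence $t=p^{\delta e}$ is a subsequence of the positive integers (note $\delta e$ is an integer since $\fd=\log_p[k:k^p]$ is an integer for $F$-finite $R$), and we have already proved in an earlier lemma that $\lim_{t\to\infty}t^{-1}\surj_N(t\alpha)$ exists; a convergent sequence has the same limit along any subsequence. Chaining the three equalities — $s_R(M)$ equals the $p^{\delta e}$-limit of $\surj_M({}^eM)/p^{\delta e}$ by definition (now known to be a true limit), which equals the $p^{\delta e}$-limit of $\surj_M(p^{\delta e}\FL([M]))/p^{\delta e}$ by the comparison above, which equals $\asn_M(\FL([M]))$ — completes the proof. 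I expect the $\nu$-bound to be the only genuinely substantive point; everything else is bookkeeping with the estimates from Lemma~\ref{alpha-surj.lem} and the already-established existence of $\asn_N(\alpha)$.
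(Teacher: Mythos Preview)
Your proof is correct and follows essentially the same route as the paper: apply Lemma~\ref{alpha-surj.lem}, {\bf 2} to $\alpha=[{}^eM]$ and $\beta=p^{\delta e}\FL([M])$, observe that the $\nu$-term is bounded by the constant $\nu(\FL([M]))$ (since $\supp(\inf\{\alpha,\beta\})\subset\supp\beta=\supp\FL([M])$ and $\nu$ depends only on the support), divide by $p^{\delta e}$, and pass to the limit. Your parenthetical argument for the $\nu$-bound is exactly the paper's; the preceding discussion about coefficients growing like $p^{\delta e}$ is unnecessary (the containment of supports holds for all $e$, not just large $e$), but harmless.
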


  \begin{proof}
    By Lemma~\ref{alpha-surj.lem}, 
    \begin{multline*}
      p^{-\delta e} \abs{\surj_M(p^{\delta e}\FL([M]))-\surj_M([{}^eM])}
      \\
      \leq
    \norm{\FL([M])-p^{-\delta e}[{}^eM]}+p^{-\delta e}\nu(\supp(\FL([M]))).
    \end{multline*}
    Taking the limit $e\rightarrow\infty$, we get the desired result.
  \end{proof}

  \begin{corollary}\label{s-positive.cor}
    Let the assumption be as in the theorem.
    Then the following are equivalent.
    \begin{enumerate}
    \item[\bf 1] $s(M)>0$.
    \item[\bf 2] For any $N\in\md R$ such that
      $\supp([N])=\supp(\FL(M))$, there exists some $r\geq 1$ and a surjective
      $R$-linear map $N^{\oplus r}\rightarrow M$.
    \item[\bf 3] There exist some $N\in\md R$ such that
      $\supp([N])\subset\supp(\FL(M))$ and a surjective $R$-linear map $N\rightarrow M$.
    \end{enumerate}
  \end{corollary}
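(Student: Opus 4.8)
The plan is to reduce everything to the identity $s(M)=\asn_M(\FL([M]))$ established in the preceding theorem and then unwind the definition of $\asn_M$ in terms of the modules $M_{t\alpha}$. Set $\alpha:=\FL([M])$ and write $\alpha=\sum_{W\in\Ind(R)}c_W[W]$. Since $[{}^eM]\geq 0$ for every $e$ and the nonnegative cone of $\Theta(R)$ is closed, $\alpha\geq 0$; hence $c_W\geq 0$ for all $W$ and $\supp\alpha=\{W\in\Ind(R)\mid c_W>0\}$ is a finite set. Because all coefficients $tc_W$ are nonnegative, $\langle t\alpha\rangle=\sum_W\floor{tc_W}[W]$, so $M_{t\alpha}\cong\bigoplus_W W^{\oplus\floor{tc_W}}$, and for $t$ large enough that $tc_W\geq 1$ for every $W\in\supp\alpha$ this is $\bigoplus_{W\in\supp\alpha}W^{\oplus\floor{tc_W}}$. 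Throughout I use that $R$ is Henselian local, so that $\md R$ is Krull--Schmidt and multiplicities of indecomposable summands are well defined; all the "direct summand" manipulations below are legitimate for this reason.

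For $\mathbf 1\Rightarrow\mathbf 2$: if $s(M)=\asn_M(\alpha)>0$ then $\tfrac1t\surj_M(t\alpha)\to\asn_M(\alpha)>0$, so there is a $t$ with $\surj_M(t\alpha)=\surj_M(M_{t\alpha})\geq 1$, i.e.\ a surjection $M_{t\alpha}\to M$. Given $N$ with $\supp([N])=\supp\alpha$, Krull--Schmidt gives $N\cong\bigoplus_{W\in\supp\alpha}W^{\oplus n_W}$ with every $n_W\geq 1$. Put $r:=\max_{W}\max(0,\floor{tc_W})$; then $rn_W\geq\max(0,\floor{tc_W})$ for every $W$ (using $n_W\geq 1$ whenever $\floor{tc_W}\geq 1$, since then $c_W>0$), so $M_{t\alpha}$ is a direct summand of $N^{\oplus r}$, and composing the split surjection $N^{\oplus r}\twoheadrightarrow M_{t\alpha}$ with $M_{t\alpha}\to M$ gives the required surjection. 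For $\mathbf 2\Rightarrow\mathbf 3$: note $M\neq 0$ forces $\supp\alpha\neq\emptyset$, so $Y(\alpha)=\bigoplus_{W\in\supp\alpha}W\neq 0$; applying $\mathbf 2$ with $N:=Y(\alpha)$ produces $r\geq 1$ and a surjection $Y(\alpha)^{\oplus r}\to M$, and $Y(\alpha)^{\oplus r}$ serves as the witness in $\mathbf 3$ since $\supp([Y(\alpha)^{\oplus r}])=\supp\alpha\subseteq\supp(\FL(M))$.

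For $\mathbf 3\Rightarrow\mathbf 1$: given a surjection $\pi\colon N\to M$ with $\supp([N])\subseteq\supp\alpha$, write $N\cong\bigoplus_{W\in\supp\alpha}W^{\oplus n_W}$ and set $I:=\{W\mid n_W>0\}$, which is nonempty because $M\neq 0$. For $t$ large, $W^{\oplus\floor{tc_W}}$ contains $W^{\oplus s\,n_W}$ as a direct summand whenever $s\leq\floor{tc_W}/n_W$, so $M_{t\alpha}$ contains $N^{\oplus s}$ as a direct summand with $s:=\min_{W\in I}\floor{\floor{tc_W}/n_W}$. Since $\pi^{\oplus s}\colon N^{\oplus s}\to M^{\oplus s}$ is surjective, so is the composite $M_{t\alpha}\twoheadrightarrow N^{\oplus s}\xrightarrow{\pi^{\oplus s}}M^{\oplus s}$, whence $\surj_M(t\alpha)\geq s$. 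Dividing by $t$, using $\floor{\floor{tc_W}/n_W}\geq (tc_W-1)/n_W-1$, and letting $t\to\infty$ gives $\asn_M(\alpha)\geq\min_{W\in I}c_W/n_W>0$, i.e.\ $s(M)>0$.

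The only step with genuine content is $\mathbf 3\Rightarrow\mathbf 1$, and even there the substance is merely the floor estimate $\tfrac1t\floor{\floor{tc_W}/n_W}\to c_W/n_W$ together with the existence of the limit defining $\asn_M(\alpha)$ (the earlier lemma). The points that need care rather than cleverness are the preliminary observations that $\alpha\geq 0$ has finite support and that $M\neq 0$, which forces $\supp\alpha\neq\emptyset$ and lets one use $Y(\alpha)$ as a canonical test module.
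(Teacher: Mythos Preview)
Your proof is correct and follows essentially the same route as the paper's: each implication rests on the identity $s(M)=\asn_M(\FL([M]))$ together with a comparison between $M_{t\alpha}$ and suitable direct sums $N^{\oplus r}$ via Krull--Schmidt.

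One correction: the assertion ``$M\neq 0$ forces $\supp\alpha\neq\emptyset$'' is false in general. For instance, with $R=k[[x,y]]$ ($k$ perfect) and $M=k$, one has ${}^eM\cong k$ for all $e$, so $p^{-\delta e}[{}^eM]\to 0$ and $\FL([M])=0$. Fortunately you do not need this claim: if $\supp\alpha=\emptyset$ and condition~\textbf{2} holds, applying~\textbf{2} to $N=0$ (which satisfies $\supp([0])=\emptyset=\supp\alpha$) yields a surjection $0\to M$, whence $M=0$ and~\textbf{3} is trivial. Alternatively, in the cycle of implications one may simply note that if $\supp\alpha=\emptyset$ and $M\neq 0$ then all three conditions fail.

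For \textbf{3}$\Rightarrow$\textbf{1} the paper argues a bit more economically: since $\supp([N])\subset\supp\alpha$, choose $k>0$ with $k\alpha\geq[N]$ and invoke the already established facts $\asn_M(k\alpha)=k\,\asn_M(\alpha)$ and $\asn_M(\beta)+\asn_M(\gamma)\leq\asn_M(\beta+\gamma)$ for $\beta,\gamma\geq 0$ to obtain
\[
s(M)=\asn_M(\alpha)\geq k^{-1}\asn_M([N])\geq k^{-1}\surj_M(N)>0.
\]
Your explicit floor estimate effectively reproves the relevant special case of these lemmas by hand, which is fine but redundant given the machinery already in place.
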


  \begin{proof}
    {\bf 1$\Rightarrow$2}.
    As $\asn_M(\FL(M))>0$, there exists some $t>0$ such that $\surj_M(t\FL(M))>0$.
    By the choice of $N$, there exists some $r\geq 1$ such that
    $r[N]\geq t\FL(M)$ and so $\surj_M N^{\oplus r}\geq \surj_M(t\FL(M))>0$.

    {\bf 2$\Rightarrow$3}.
    Let $N=W_1\oplus\cdots\oplus W_r$, where $\{W_1,\ldots,W_r\}=\supp(\FL(M))$.
    Then there exists some $r\geq 1$ and a surjective $R$-linear map
    $N^{\oplus r}\rightarrow M$, and $\supp[N^{\oplus r}]\subset \supp(\FL(M))$.

    {\bf 3$\Rightarrow$1}.
    By the choice of $N$, there exists some $k>0$ such that 
    $k\FL(M)\geq [N]$.
    Then $s(M)=\asn_M(\FL(M))\geq k^{-1}\asn_M[N]\geq k^{-1}\surj_M[N]>0$.
  \end{proof}

  \section{The dual $F$-signature of the ring of invariants}\label{main.sec}

  Utilizing the result in \cite{SH} and the last section, we give a criterion for the
  condition $s(\omega_{\hat A})>0$ for the ring of invariants $A$, where $\hat A$ is
  the completion.

  \paragraph
  Let $k$ be an algebraically closed field, $V=k^d$, $G$ a finite subgroup of $\GL(V)$.
  In this section, we assume that $G$ does not have a pseudo-reflection,
  where we say that $g\in\GL(V)$ is a pseudo-reflection if $\rank(g-1_V)=1$.
  Let $v_1,\ldots,v_d$ be a fixed $k$-basis of $V$.
  Let $B:=\Sym V=k[v_1,\ldots,v_d]$, and $A=B^G$.
  Let $\fm$ and $\fn$ be the irrelevant ideals of $A$ and $B$, respectively.
  Let $\hat A$ and $\hat B$ be the completion of $A$ and $B$, respectively.

  For a $G$-module $W$, we define $M_W:=(B\otimes_k W)^G$.
  Let $k=V_0,V_1,\ldots,V_n$ be the irreducible representations of $G$.
  Let $P_i\rightarrow V_i$ be the projective cover.
  Set $M_i:=M_{P_i}=(B\otimes_k P_i)^G$.
  For a finite dimensional
  $G$-module $W$, $\det_W$ denote the determinant representation $\ext^{\dim W} W$ of $W$.
  Let $V_\nu=\det_V$ be the determinant representation of $V$.

  \begin{lemma}\label{canonical.lem}
    The canonical module $\omega_A$ of $A$ is isomorphic to $M_\nu=M_{\det_V}$.
  \end{lemma}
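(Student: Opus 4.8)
The plan is to identify the canonical module $\omega_A$ via the standard theory for invariant rings of finite groups acting on a polynomial ring, using the fact that $A = B^G$ is a finite extension and $B$ is regular. First I would recall that for a finite group $G$ acting on $B = \Sym V$ with $A = B^G$, the ring $B$ is Cohen--Macaulay and free (hence a maximal Cohen--Macaulay module) over $A$, so that $\omega_A \cong \Hom_A(B,\omega_A) \cong \omega_B$ as $A$-modules once we account for the $G$-action; more precisely the duality functor gives $\omega_A \cong (\Hom_A(B, \omega_A))^G$. Since $B$ is regular of dimension $d$, we have $\omega_B \cong \extop B = \ext^d V \otimes_k B$ as a $G$-equivariant module, where $G$ acts on $\ext^d V = \det_V = V_\nu$ by the determinant character.

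The key step is to pass from the graded local duality statement for $B$ to one for $A$. Because $B$ is a free $A$-module (by the Hochster--Eagon theorem, since $B$ is Cohen--Macaulay and $A \hookrightarrow B$ is finite with $B$ a domain), and because the trace map / Reynolds operator splits $A \to B$ when... wait, $p$ divides $|G|$ so we do not have that splitting — instead the correct tool is that $\omega_A = (\omega_B)^G$ where $\omega_B$ is regarded with its natural $G$-action twisted appropriately. Concretely: $\omega_A \cong (B \otimes_k \ext^d V)^G$ via the isomorphism coming from the fact that $\omega_B = B \otimes_k \det_V$ with its diagonal $G$-action and that taking $G$-invariants computes $\omega_A$ from $\omega_B$ over the finite extension $A \to B$. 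This last point uses that, for a finite $G$ acting on a normal domain $B$ with $A = B^G$ in the non-modular-coefficient-free setting, the canonical module of $A$ is the $G$-invariant part of the canonical module of $B$ (this is in Broer, or follows from Watanabe's results / the general theory; see also Knop). Hence $\omega_A \cong (B \otimes_k \det_V)^G = M_{\det_V} = M_\nu$ by definition of $M_W$.

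The main obstacle, and the point that needs care, is justifying the identity $\omega_A \cong (\omega_B)^G$ when $p \mid |G|$, since the usual quick argument via the Reynolds operator (averaging) is unavailable. The plan is to instead invoke that $B$ is a reflexive $A$-module with $\Hom_A(B,A) \cong B$ (as $G$-equivariant $A$-modules, up to the determinant twist by the different, which is trivial here because $G$ has no pseudo-reflections so the extension $A \subset B$ is étale in codimension one, making the different trivial), and then apply the canonical module formula $\omega_A \cong \Hom_A(B, \omega_B)^G \cong (\omega_B)^G$. That the different is trivial is exactly where the no-pseudo-reflection hypothesis enters, and it is the crux: without it one would pick up a twist by the character $\prod$ of the pseudo-reflection hyperplanes. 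Once this is in place, substituting $\omega_B \cong B \otimes_k \ext^d V$ and taking invariants gives $M_{\ext^d V} = M_{\det_V} = M_\nu$, completing the proof. I expect the write-up to cite Broer's paper (already referenced as \cite{Broer}) or the companion \cite{SH} for the precise form of this canonical-module identity, since that reference is where the setup $M_W = (B \otimes_k W)^G$ and the relevant representation-theoretic conventions are established.
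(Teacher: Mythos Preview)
The paper does not actually prove this lemma: its entire proof is the sentence ``See \cite[(14.28)]{Hashimoto12} and references therein.'' So there is no argument in the paper to compare against; your proposal is a genuine sketch of the underlying mathematics, and the relevant question is whether that sketch is sound.

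Your second approach---using that $G$ has no pseudo-reflections so that $\Spec B \to \Spec A$ is \'etale in codimension one, hence both $\omega_A$ and $(B\otimes_k\det_V)^G$ are reflexive $A$-modules that agree on the \'etale locus---is correct and is exactly the standard argument behind the cited result. Your first attempt, however, has a real error that you only partially repair: in the modular case $A=B^G$ need not be Cohen--Macaulay (indeed the paper later exhibits such examples), so $B$ is not in general free over $A$, and the Hochster--Eagon theorem does not say what you attribute to it. You do abandon this line, but the write-up should drop it entirely rather than leave the false claim ``$B$ is free over $A$'' standing before the self-correction. Finally, a minor point: the paper cites \cite{Hashimoto12}, not \cite{Broer} or \cite{SH}, for this identification.
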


  \begin{proof}
    See \cite[(14.28)]{Hashimoto12} and references therein.
  \end{proof}

  \begin{lemma}\label{selfinjective.lem}
    Let $\Lambda$ be a selfinjective finite dimensional $k$-algebra, $L$ a simple
    \(left\) $\Lambda$-module, and $h:P\rightarrow L$ its projective cover.
    Let $M$ be a finitely generated indecomposable $\Lambda$-module.
    Then the following are equivalent.
    \begin{enumerate}
    \item[\bf 1] $\Ext^1_\Lambda(M,\rad P)=0$.
    \item[\bf 2] $h_*:\Hom_\Lambda(M,P)\rightarrow \Hom_\Lambda(M,L)$ is surjective.
    \item[\bf 3] $M$ is either projective, or $M/\rad M$ does not contain $L$.
    \end{enumerate}
  \end{lemma}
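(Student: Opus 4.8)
**The plan is to prove the equivalence of the three conditions for an indecomposable module $M$ over a selfinjective finite dimensional $k$-algebra $\Lambda$, cycling $1\Rightarrow 2\Rightarrow 3\Rightarrow 1$.**

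For $1\Rightarrow 2$: apply $\Hom_\Lambda(M,-)$ to the short exact sequence $0\rightarrow \rad P\rightarrow P\xrightarrow{h} L\rightarrow 0$ (which is exact since $P$ is the projective cover of the simple module $L$, so $\Ker h=\rad P$). This yields the long exact sequence
\[
\Hom_\Lambda(M,P)\xrightarrow{h_*}\Hom_\Lambda(M,L)\rightarrow \Ext^1_\Lambda(M,\rad P)\rightarrow\cdots,
\]
so vanishing of $\Ext^1_\Lambda(M,\rad P)$ forces $h_*$ to be surjective. This direction is immediate.

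For $2\Rightarrow 3$: suppose $M$ is not projective and that $M/\rad M$ contains $L$ as a composition factor; I want a contradiction. A surjection $M\rightarrow L$ factors as $M\rightarrow M/\rad M\rightarrow L$, and such a surjection exists precisely because $L$ is a summand of the semisimple module $M/\rad M$. By hypothesis $h_*$ is surjective, so this map $M\rightarrow L$ lifts to a map $\psi:M\rightarrow P$ with $h\psi=$ the given surjection; in particular $\psi$ is not the zero map, and in fact $\psi$ must be surjective: its image is a submodule of $P$ that surjects onto $L=P/\rad P$, hence $\Image\psi+\rad P=P$, so by Nakayama $\Image\psi=P$. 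Since $P$ is projective, $\psi$ splits, so $P$ is a direct summand of the indecomposable module $M$; as $M$ is not projective this is impossible unless $M\cong P$, but then $M$ is projective, a contradiction. Thus if $M$ is not projective, $M/\rad M$ cannot contain $L$.

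For $3\Rightarrow 1$: if $M$ is projective then $\Ext^1_\Lambda(M,-)=0$ and we are done. So assume $M/\rad M$ does not contain $L$. Here I would invoke the selfinjectivity of $\Lambda$: since $\Lambda$ is selfinjective, $\rad P$ is the injective envelope issue — more precisely, over a selfinjective algebra, $\Ext^1_\Lambda(M,X)\cong \overline{\Hom}_\Lambda(\Omega M, X)$ where $\Omega M$ is a syzygy and $\overline{\Hom}$ is the stable Hom, or one uses that $\Ext^1_\Lambda(M,\rad P)\cong D\,\overline{\Hom}_\Lambda(\nu^{-1}(\rad P),M)$ via Auslander--Reiten duality. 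The cleaner route: dualize. Since $\Lambda$ is selfinjective, $P$ is also injective, so $\rad P$ has injective envelope $P$, and $P/\rad P=L$, giving $0\rightarrow\rad P\rightarrow P\rightarrow L\rightarrow 0$ as an injective copresentation; then $\Ext^1_\Lambda(M,\rad P)$ is the cokernel of $\Hom_\Lambda(M,P)\rightarrow\Hom_\Lambda(M,L)$. Now $\Hom_\Lambda(M,L)$ is nonzero iff $M$ surjects onto $L$ iff $L$ is a summand of $M/\rad M$; our hypothesis says this fails, so $\Hom_\Lambda(M,L)=0$, whence $\Ext^1_\Lambda(M,\rad P)=0$.

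**The main obstacle** is the direction $3\Rightarrow 1$, and specifically making precise that $\Hom_\Lambda(M,L)=0$ already forces $\Ext^1_\Lambda(M,\rad P)=0$. The subtlety is that surjectivity of $\Hom_\Lambda(M,P)\rightarrow\Hom_\Lambda(M,L)$ is what we proved equivalent to vanishing of the $\Ext^1$; so we need to know that when $L$ is not a composition factor of $M/\rad M$, the target $\Hom_\Lambda(M,L)$ is itself zero — which is clear since any nonzero map $M\rightarrow L$ kills $\rad M$ and hence gives a nonzero map $M/\rad M\rightarrow L$, forcing $L$ to appear in the head of $M$. The only place selfinjectivity is genuinely needed is to guarantee $\Ker(P\rightarrow L)=\rad P$ has $P$ as a well-behaved term — but that is automatic from $P$ being the projective cover of $L$ — so in fact selfinjectivity is used mainly to ensure the statement is packaged correctly (e.g. that $\rad P$ is the relevant module appearing in Auslander--Reiten theory); I would double-check whether the hypothesis is actually invoked, and if the proof goes through for any finite dimensional algebra, note that.
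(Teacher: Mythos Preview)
Your proof is correct and follows essentially the same route as the paper: the paper proves $1\Leftrightarrow 2$ via the long exact sequence (using $\Ext^1_\Lambda(M,P)=0$ since $P$ is injective) and then $2\Leftrightarrow 3$ by exactly your lifting-plus-Nakayama argument in one direction and the observation $\Hom_\Lambda(M,L)=0$ in the other; you simply arrange the same ingredients as a cycle $1\Rightarrow 2\Rightarrow 3\Rightarrow 1$.

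One correction to your closing commentary: selfinjectivity \emph{is} genuinely used, and you used it yourself. In $3\Rightarrow 1$ you asserted that $\Ext^1_\Lambda(M,\rad P)$ equals the cokernel of $h_*$; this follows from the long exact sequence only because the next term $\Ext^1_\Lambda(M,P)$ vanishes, which requires $P$ to be injective. Without selfinjectivity the equivalence $1\Leftrightarrow 2$ can fail (only $1\Rightarrow 2$ survives), so drop the suggestion that the hypothesis might be superfluous.
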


  \begin{proof}
    {\bf 1$\Leftrightarrow$2}.
    This is because
    \[
    \Hom_\Lambda(M,P)\xrightarrow{h_*}
    \Hom_\Lambda(M,L)\rightarrow
    \Ext^1_\Lambda(M,\rad P)\rightarrow
    \Ext^1_\lambda(M,P)
    \]
    is exact and $\Ext^1_\Lambda(M,P)=0$ (since $P$ is injective).

    {\bf 2$\Rightarrow$3}.
    Assume the contrary.
    Then as $M/\rad M$ contains $L$, there is a surjective map $M\rightarrow L$.
    By assumption, this map lifts to $M\rightarrow P$, and this is surjective by
    Nakayama's lemma.
    As $P$ is projective, this map splits.
    As $M$ is indecomposable, $M\cong P$, and this is a contradiction.

    {\bf 3$\Rightarrow$2}.
    If $M$ is projective, then $h_*$ is obviously surjective.
    If $M/\rad M$ does not contain $L$, then $\Hom_\Lambda(M,L)=0$, and
    $h_*$ is obviously surjective.
  \end{proof}

  \begin{theorem}\label{main.thm}
    Let $p$ divide the order $\abs{G}$ of $G$.
    Then the following are equivalent.
    \begin{enumerate}
    \item[\bf 1] $s(\omega_{\hat A})>0$.
    \item[\bf 2] The canonical map $M_\nu\rightarrow M_{V_\nu}=\omega_A$ is surjective.
    \item[\bf 3] $H^1(G,B\otimes_k \rad P_\nu)=0$.
    \item[\bf 4] For any non-projective finitely generated
      indecomposable $G$-summand $M$ of $B$,
      $M$ does not contain $\det^{-1}_V$, the $k$-dual of $\det_V$.
    \end{enumerate}
If these conditions hold, then $s(\omega_{\hat A})\geq 1/\abs{G}$.
  \end{theorem}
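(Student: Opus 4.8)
The plan is to prove the chain of equivalences $\mathbf{1}\Leftrightarrow\mathbf{2}\Leftrightarrow\mathbf{3}\Leftrightarrow\mathbf{4}$ together with the quantitative bound, by combining three ingredients already assembled in the excerpt: Corollary~\ref{s-positive.cor} (the characterization of $s(M)>0$ via the Frobenius limit), the description of $\FL([\omega_{\hat A}])$ from \cite{SH}, and Lemma~\ref{selfinjective.lem} applied to the self-injective algebra $kG$. Let me set up the dictionary. By \cite{SH}, the Frobenius limit $\FL([\hat B])$ (equivalently, after base change, $\FL$ of modules over $\hat A$) is expressed through the modules $M_i = (B\otimes_k P_i)^G$; in particular $\supp(\FL([\omega_{\hat A}])) = \{M_0,\ldots,M_n\}$ is (up to nonzero multiplicities) the set of all $M_i$ attached to the projective covers $P_i$, since $\hat B \otimes_{kG}(-)$ carries the finitely generated projective $kG$-modules precisely to these summands and the Frobenius limit of $[\hat B\otimes_k W]$ for any $G$-module $W$ lands in the span of the $[M_i]$. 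So the module ``$N = \bigoplus_i M_i$'' is the natural candidate to test surjectivity onto $\omega_{\hat A}$, and $\mathbf{2}$ says precisely that the summand $M_\nu = M_{P_\nu}$ alone already surjects onto $\omega_A = M_{V_\nu}$.

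Now I would run the equivalences. For $\mathbf{3}\Leftrightarrow\mathbf{4}$: apply Lemma~\ref{selfinjective.lem} with $\Lambda = kG$, $L = V_\nu^* = \det_V^{-1}$ (note $kG$-modules are naturally right modules, or one passes to the opposite side; the point is that $\Hom$ with $B$ exchanges $W$ and $W^*$, so the simple showing up in $H^1(G, B\otimes_k \rad P_\nu)$ corresponds to $\det_V^{-1}$), and $P = P_\nu$. Decomposing $B = \bigoplus_M M$ into indecomposable $G$-summands, $H^1(G, B\otimes_k\rad P_\nu) = \bigoplus_M \Ext^1_{kG}(M^*, \rad P_\nu)$ (using $H^1(G,-) = \Ext^1_{kG}(k,-)$ and $B\otimes_k\rad P_\nu$ has $M\otimes_k \rad P_\nu$ with $\Ext^1_{kG}(k, M\otimes \rad P_\nu)=\Ext^1_{kG}(M^*,\rad P_\nu)$), and Lemma~\ref{selfinjective.lem} $\mathbf{1}\Leftrightarrow\mathbf{3}$ says each summand vanishes iff $M^*$ is projective (equivalently $M$ is projective) or $M^*/\rad M^*$ does not contain $L = \det_V^{-1}$, i.e.\ $M/\rad M$ does not contain $\det_V$ —- wait, one must be careful: the condition in $\mathbf{4}$ is phrased as ``$M$ does not contain $\det_V^{-1}$'', meaning $\det_V^{-1}$ is not a composition factor (equivalently, by self-injectivity, not in the socle, equivalently not in the head after dualizing); I would reconcile the two phrasings by noting $\Ext^1_{kG}(M^*,\rad P_\nu)=0$ iff $M^*$ is projective or $\Hom_{kG}(M^*, V_\nu)=0$, and $\Hom_{kG}(M^*,V_\nu) = \Hom_{kG}(V_\nu^*, M) = \Hom_{kG}(\det_V^{-1}, M)$, which is nonzero exactly when $\det_V^{-1}$ embeds in $M$. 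For $\mathbf{2}\Leftrightarrow\mathbf{3}$: by Lemma~\ref{selfinjective.lem} $\mathbf{1}\Leftrightarrow\mathbf{2}$ applied summand-by-summand, $\mathbf{3}$ is equivalent to the surjectivity of $(h_\nu)_* : \Hom_{kG}(B, P_\nu) \to \Hom_{kG}(B, V_\nu)$ where $h_\nu : P_\nu \to V_\nu$ is the projective cover; taking $G$-invariants and using $\Hom_{kG}(B, W) = (B^*\otimes_k W)^G$ and the identification with $M_W$-type modules (here one uses that $B$ is a free module over its Veronese / the standard fact $(B\otimes_k W)^G \cong \Hom_{kG}(B, W)$ up to the appropriate twist), this is exactly the statement that the map $M_\nu = M_{P_\nu} \to M_{V_\nu} = \omega_A$ induced by $h_\nu$ is surjective. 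I should double-check the direction of the twist via Lemma~\ref{canonical.lem}, but the bookkeeping is standard.

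The substantive equivalence is $\mathbf{1}\Leftrightarrow\mathbf{2}$, and this is where Corollary~\ref{s-positive.cor} does the work. For $\mathbf{2}\Rightarrow\mathbf{1}$: if $M_\nu \to \omega_A$ is surjective, then after completion $M_{P_\nu}^{\wedge} \to \omega_{\hat A}$ is surjective, and since $\supp([M_{P_\nu}^{\wedge}]) = \{M_\nu\} \subset \{M_0,\ldots,M_n\} = \supp(\FL([\omega_{\hat A}]))$ (here I use the \cite{SH} description that the $M_i$ are exactly the indecomposables appearing in the Frobenius limit of the canonical module, which is a projective-resolution type statement), condition $\mathbf{3}$ of Corollary~\ref{s-positive.cor} is met, giving $s(\omega_{\hat A})>0$. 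For $\mathbf{1}\Rightarrow\mathbf{2}$: conversely, $s(\omega_{\hat A})>0$ gives, by Corollary~\ref{s-positive.cor} $\mathbf{1}\Rightarrow\mathbf{2}$ with $N = \bigoplus_i M_i$ (which has $\supp([N]) = \supp(\FL(\omega_{\hat A}))$), a surjection $(\bigoplus_i M_i^{\wedge})^{\oplus r} \to \omega_{\hat A}$ for some $r$; I then need to descend this to the statement that $M_\nu$ alone surjects onto $\omega_A$. This is the \emph{main obstacle}: a priori the surjection uses all the $M_i$, not just $M_\nu$. The resolution is to pass to the head: $\omega_{\hat A}/\fm\omega_{\hat A}$, viewed via the \cite{SH}-equivalence between (a suitable quotient of) $\hat A$-modules and $kG$-modules, corresponds to a $kG$-module whose unique ``new'' simple constituent forcing non-liftability is $V_\nu$ (by Lemma~\ref{canonical.lem}, $\omega_A$ is the invariants of $\det_V$, so its generators in lowest degree carry the representation $\det_V = V_\nu$), and $M_i \to \omega_A$ can be nonzero on the head only when $P_i$ surjects onto $V_\nu$, i.e.\ $i = \nu$; hence the surjection $\bigoplus M_i^{\oplus r} \to \omega_{\hat A}$ factors on heads through $M_\nu^{\oplus r} \to \omega_{\hat A}$, forcing $M_\nu \to \omega_A$ to be surjective after all by Nakayama. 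Finally, for the bound $s(\omega_{\hat A}) \geq 1/\abs{G}$: assuming $\mathbf{2}$, I have a surjection $M_\nu \to \omega_A$, hence $\surj_{\omega_{\hat A}}(M_\nu^{\wedge}) \geq 1$; since $\FL([\omega_{\hat A}])$, being (by \cite{SH}) computed from the Frobenius action on $\hat B \otimes_{kG}(-)$, has the coefficient of $[M_\nu^{\wedge}]$ equal to $\dim_k P_\nu / \abs{G}$ (the density of the regular-representation copy of $P_\nu$ inside $B$, which is $\geq 1/\abs{G}$ since $\dim P_\nu \geq 1$), we get $s(\omega_{\hat A}) = \asn_{\omega_{\hat A}}(\FL([\omega_{\hat A}])) \geq (\dim P_\nu/\abs{G})\cdot \surj_{\omega_{\hat A}}(M_\nu^{\wedge}) \geq 1/\abs{G}$ by the superadditivity and scaling properties of $\asn$ established in the lemmas of Section~\ref{asn.sec}.
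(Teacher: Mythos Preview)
Your treatment of $\mathbf{2}\Leftrightarrow\mathbf{3}\Leftrightarrow\mathbf{4}$ and of $\mathbf{2}\Rightarrow\mathbf{1}$ follows the paper's approach via Lemma~\ref{selfinjective.lem} and Corollary~\ref{s-positive.cor}, modulo the bookkeeping about duals that you yourself flag. One small correction on the bound: by \cite[(5.1)]{SH} the coefficient of $[\hat M_\nu]$ in $\FL([\omega_{\hat A}])$ is $\dim V_\nu/|G|$, not $\dim P_\nu/|G|$; since $V_\nu=\det_V$ is one-dimensional this equals $1/|G|$, and the paper's chain is $s(\omega_{\hat A})\geq \frac{\dim V_\nu}{|G|}\asn_{\omega_{\hat A}}(\hat M_\nu)\geq \frac{1}{|G|}\surj_{\omega_A}(M_\nu)\geq \frac{1}{|G|}$.

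The genuine gap is in $\mathbf{1}\Rightarrow\mathbf{2}$. Your claim that ``$M_i\to\omega_A$ can be nonzero on the head only when $P_i$ surjects onto $V_\nu$, i.e.\ $i=\nu$'' is false. Take $i=0$, so $M_0=A$; the map $A\to\omega_A$ sending $1$ to any minimal generator is nonzero on $\omega_A/\fm\omega_A$, yet $P_0/\rad P_0=V_0\neq V_\nu$ whenever $\det_V$ is nontrivial. The point is that $\omega_{\hat A}/\fm\omega_{\hat A}$ is merely a $k$-vector space with no $G$-action, so you cannot separate contributions by $G$-isotype there; the ``equivalence between a suitable quotient of $\hat A$-modules and $kG$-modules'' you invoke does not exist in the form you need.

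The paper's argument is different and uses the \emph{other} description $\FL([\omega_{\hat A}])=|G|^{-1}[\hat B]$ from \cite[(4.16)]{SH}. Corollary~\ref{s-positive.cor} then yields a surjection $h\colon \hat B^r\to\omega_{\hat A}$. Applying the categorical equivalence $\gamma=(\hat B\otimes_{\hat A}?)^{**}\colon \Ref(\hat A)\to\Ref(G,\hat B)$ turns this into $\tilde h\colon(\hat B\otimes_k kG)^r\to\hat B\otimes_k\det_V$. Now $\hat B\otimes_k kG$ is a \emph{projective object in the category of $(G,\hat B)$-modules}, so $\tilde h$ lifts through the surjection $\hat B\otimes_k P_\nu\to\hat B\otimes_k\det_V$. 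Transporting back via $\gamma^{-1}=(?)^G$, the original $h$ factors through $\hat M_\nu\to\omega_{\hat A}$, forcing the latter to be surjective. The mechanism is projectivity in the equivariant reflexive category, not a Nakayama-on-heads argument on the $A$-side.
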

  
  \begin{proof}
    We prove the equivalence of {\bf 2} and {\bf 3} first.
    Let $B=\bigoplus_j N_j$ be a decomposition into finitely generated indecomposable
    $G$-modules.
    Such a decomposition exists, since $B$ is a direct sum of finitely generated
    $G$-modules.
    The map $M_\nu\rightarrow M_{V_\nu}$ in {\bf 2} is the map
    \[
    (B\otimes P_\nu)^G\rightarrow (B\otimes \det_V)^G
    \]
    induced by the projective cover $P_\nu\rightarrow \det_V$.
    By the isomorphism $\Ext^i_G(N_j^*,?)\cong H^i(G,N_j\otimes ?)$, this map can
    be identified with the sum of 
    \[
    \Hom_G(N_j^*,P_\nu)\rightarrow \Hom_G(N_j^*,\det_V).
    \]
    On the other hand, {\bf 3} is equivalent to say that $\Ext^1_G(N_j^*,\rad P_\nu)=0$
    for any $j$.
    So the equivalence {\bf 2$\Leftrightarrow$3}
    follows from Lemma~\ref{selfinjective.lem}.

    Similarly, {\bf 4} is equivalent to say that each $N_j^*$ is injective
    (or equivalently, projective, as $kG$ is selfinjective) or
    $N_j^*/\rad N_j^*\cong (\soc N_j)^*$ does not contain $\det_V$.
    This is equivalent to say that $N_j$ is either projective, or $N_j$ (or equivalently,
    $\soc N_j$) does not contain $\det^{-1}$.
    So {\bf 4$\Leftrightarrow$2} follows from Lemma~\ref{selfinjective.lem}.

    We prove {\bf 2$\Rightarrow$1}.
    As there is a surjective map $M_\nu\rightarrow \omega_A$ and
    \[
    \FL([\omega_{\hat A}])=\frac{1}{|G|}\sum_{i=0}^n(\dim V_i)[\hat M_i]
    \]
    by \cite[(5.1)]{SH}, $s(\omega_{\hat A})>0$ by
    Corollary~\ref{s-positive.cor}.
    Moreover,
    \[
    s(\omega_{\hat A})=\asn_{\omega_{\hat A}}(\FL([\omega_{\hat A}]))
    \geq \frac{\dim V_\nu}{\abs{G}}\asn_{\omega_{\hat A}}(\hat M_\nu)
    \geq \frac{1}{\abs{G}}\surj_{\omega_A}(M_\nu)\geq \frac{1}{\abs{G}},
    \]
    and the last assertion has been proved.

    We prove {\bf 1$\Rightarrow$2}.
    By \cite[(4.16)]{SH},
    \[
    \FL([\omega_{\hat A}])=\frac{1}{\abs{G}}[\hat B].
    \]
    So by Corollary~\ref{s-positive.cor},
    there is some $r>0$ and a surjective map $h:\hat B^r\rightarrow \omega_{\hat A}$.
    By the equivalence $\gamma=(\hat B\otimes_{\hat A}?)^{**}:
    \Ref(\hat A)\rightarrow \Ref(G,\hat B)$ (see \cite[(2.4)]{HN} and
    \cite[(5.4)]{SH}), there corresponds
    \[
    \tilde h=\gamma(h): (\hat B\otimes_k kG)^r\rightarrow \hat B\otimes_k \det.
    \]
    As $\hat B\otimes_k kG$ is a projective object in the category of $(G,B)$-modules,
    $\tilde h$ factors through the surjection
    \[
    \hat B\otimes_k P_\nu\rightarrow \hat B\otimes_k \det.
    \]
    Returning to the category $\Ref\hat A$, $h$ factors through $\hat M_\nu
    =(\hat B\otimes_{\hat A} P_\nu)^G\rightarrow \omega_{\hat A}$.
    So this map must be surjective, and {\bf 2} follows.
  \end{proof}
    
  \begin{corollary}
    Assume that $p$ divides $\abs{G}$.
    If $s(\omega_{\hat A})>0$, then $\det^{-1}_V$ is not a direct summand of $B$.
  \end{corollary}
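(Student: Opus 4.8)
The plan is to read this off from the equivalence \textbf{1}$\Leftrightarrow$\textbf{4} of Theorem~\ref{main.thm}. Arguing by contraposition, suppose that $\det^{-1}_V$ is a direct summand of $B$ as a $G$-module; I will show that condition \textbf{4} of Theorem~\ref{main.thm} fails, whence $s(\omega_{\hat A})$ is not positive. (Note that the hypothesis $p\mid\abs{G}$ of the corollary is exactly the hypothesis of Theorem~\ref{main.thm}, so the theorem applies.)

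Since $\det^{-1}_V$ is one-dimensional it is indecomposable, and by our assumption it is a finitely generated indecomposable $G$-summand of $B$. The one point that needs an argument is that $\det^{-1}_V$ is \emph{not} projective as a $kG$-module, for otherwise it would fall outside the scope of condition \textbf{4}. For this, restrict any projective $kG$-module to a Sylow $p$-subgroup $S\leq G$: the restriction is again projective, hence free over the local ring $kS$, so its $k$-dimension is a multiple of $\abs{S}=\abs{G}_p$. As $p$ divides $\abs{G}$ we have $\abs{G}_p\geq p>1$, so no one-dimensional $kG$-module can be projective. Thus $\det^{-1}_V$ is a non-projective finitely generated indecomposable $G$-summand of $B$ which (trivially) contains $\det^{-1}_V$, contradicting condition \textbf{4} of Theorem~\ref{main.thm}.

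Consequently condition \textbf{1} of that theorem also fails, i.e.\ $s(\omega_{\hat A})=0$, which establishes the contrapositive of the corollary. I do not expect any genuine obstacle here: apart from the elementary Sylow-restriction remark ruling out projectivity of one-dimensional modules, the statement is essentially a repackaging of part of Theorem~\ref{main.thm}. If one prefers to avoid even that remark, one can instead invoke the fact that $kG$ is not semisimple when $p\mid\abs{G}$, so the projective cover of the trivial module has radical $\neq 0$ and hence dimension $>1$; but the Sylow argument is the cleanest way to see it for an arbitrary one-dimensional module such as $\det^{-1}_V$.
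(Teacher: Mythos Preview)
Your proof is correct and follows essentially the same route as the paper: the paper's proof simply observes that ``being a one-dimensional representation, $\det^{-1}_V$ is not projective by assumption,'' and then invokes \textbf{1}$\Rightarrow$\textbf{4} of Theorem~\ref{main.thm}. You do the same, merely supplying the Sylow-restriction justification for the non-projectivity step that the paper leaves implicit.
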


  \begin{proof}
    Being a one-dimensional representation, $\det^{-1}_V$ is not projective by assumption.
    Thus the result follows from {\bf 1$\Rightarrow$4} of the theorem.
  \end{proof}

  \begin{lemma}
    Let $M$ and $N$ be in $\Ref(G,B)$.
    There is a natural isomorphism
    \[
    \gamma:\Hom_A(M^G,N^G)\rightarrow \Hom_B(M,N)^G.
    \]
  \end{lemma}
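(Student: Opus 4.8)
The plan is to produce the natural isomorphism $\gamma$ by hand, using that for modules in $\Ref(G,B)$ the functor $(-)^G$ of taking $G$-invariants is the right adjoint-like construction appearing in the equivalence $\Ref(G,B)\simeq\Ref(\hat A)$ (over $A$ rather than $\hat A$) recorded in \cite[(2.4)]{HN} and \cite[(5.4)]{SH}. Concretely, I would first observe that there is an obvious map: given a $B$-linear $G$-equivariant $\varphi:M\to N$, restricting to invariants gives an $A$-linear map $\varphi^G:M^G\to N^G$, and this assignment is $A$-linear in $\varphi$ and compatible with the $G$-action (which is trivial on $\Hom_A(M^G,N^G)$ but acts by conjugation on $\Hom_B(M,N)$), so it lands in $\Hom_B(M,N)^G$. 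This defines a natural transformation $\gamma^{-1}$ going the other way; the real content is that it is bijective.

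First I would construct the candidate inverse $\gamma$. Given $\psi\in\Hom_A(M^G,N^G)$, I want to extend it to a $B$-linear $G$-equivariant map $M\to N$. The key point is that $M\in\Ref(G,B)$ is determined by $M^G\in\Ref(A)$ via $M\cong(B\otimes_A M^G)^{**}$ (the double dual taken as $B$-module), by the equivalence cited above, and similarly for $N$. So from $\psi:M^G\to N^G$ one forms $B\otimes_A\psi:B\otimes_A M^G\to B\otimes_A N^G$, which is automatically $B$-linear and $G$-equivariant for the diagonal $G$-action (trivial on $M^G,N^G$, natural on $B$), then applies $(-)^{**}$ to get $M\to N$ in $\Ref(G,B)$. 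Set $\gamma(\psi)$ to be this map. Then I would check $\gamma(\psi)^G=\psi$ and, conversely, that $\gamma(\varphi^G)=\varphi$ for $\varphi\in\Hom_B(M,N)^G$: both identities reduce, via the unit/counit of the equivalence $\Ref(G,B)\simeq\Ref(A)$, to the statement that $(-)^G$ and $(B\otimes_A-)^{**}$ are mutually quasi-inverse on reflexive objects, which is exactly \cite[(2.4)]{HN}, \cite[(5.4)]{SH}. Naturality in $M$ and $N$ is then formal, since every step ($\otimes_A$, $(-)^{**}$, $(-)^G$) is functorial.

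An alternative, more self-contained route avoids the double-dual and works on a dense open locus: away from the ramification locus $A$ acts étale-locally like a splitting, so over the punctured spectrum the map $\gamma^{-1}$ is visibly an isomorphism (invariants of an induced/free action), and then reflexivity of both $\Hom_A(M^G,N^G)$ and $\Hom_B(M,N)^G$ — the former because $\Hom$ into a reflexive module over a normal domain is reflexive, the latter because $G$-invariants of a reflexive $B$-module is a reflexive $A$-module — forces the isomorphism to extend over all of $\Spec A$, since an isomorphism of reflexive sheaves on a normal variety that holds in codimension zero (indeed here in codimension one after removing the ramification divisor, which has codimension $\geq 2$ in $\Spec A$ precisely because $G$ has no pseudo-reflections) holds everywhere. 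I would probably present the first (equivalence-theoretic) argument as the main proof and mention this codimension argument as the reason the equivalence of \cite{HN} applies.

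The main obstacle I anticipate is bookkeeping the $G$-action on $\Hom_B(M,N)$ correctly and checking that $\gamma$ and $\gamma^{-1}$ are genuinely inverse rather than merely inverse up to the canonical isomorphisms $M\cong(B\otimes_A M^G)^{**}$; this is where one must invoke the precise form of the equivalence in \cite[(2.4)]{HN} and \cite[(5.4)]{SH} and not merely its existence. Everything else — $A$- versus $B$-linearity, functoriality, the triviality of the $G$-action on the $A$-side — is routine. Since this lemma is used in the proof of Theorem~\ref{main.thm} exactly in the guise of the functor $\gamma=(\hat B\otimes_{\hat A}-)^{**}$ translating surjections, I would phrase the proof to make transparent that $\gamma$ here is the $\Hom$-level shadow of that same functor, so that the paper's earlier use is retroactively justified.
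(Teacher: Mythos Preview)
Your proposal is correct and follows essentially the same approach as the paper: the paper's proof is the one-sentence observation that $\gamma=(B\otimes_A ?)^{**}:\Ref(A)\to\Ref(G,B)$ is an equivalence (with quasi-inverse $(?)^G$), together with the identification $\Hom_B(M,N)^G=\Hom_{G,B}(M,N)$, which is exactly the equivalence-theoretic argument you spell out in detail. Your alternative codimension-two/reflexivity argument is not in the paper but is a reasonable unpacking of why the cited equivalence holds in the first place.
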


  \begin{proof}
    This is simply because $\gamma=(B\otimes_A?)^{**}:\Ref(A)\rightarrow \Ref(G,B)$ is
    an equivalence, and $\Hom_B(M,N)^G = \Hom_{G,B}(M,N)$.
  \end{proof}

  \begin{theorem}
    $A$ is $F$-rational if and only if the following three conditions hold.
    \begin{enumerate}
    \item[\bf 1] $A$ is Cohen--Macaulay.
    \item[\bf 2] $H^1(G,B)=0$.
    \item[\bf 3] $(B\otimes_k (I/k))^G$ is a maximal Cohen--Macaulay $A$-module,
      where $I$ is the injective hull of $k$.
    \end{enumerate}
  \end{theorem}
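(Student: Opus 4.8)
The plan is to apply Sannai's criterion (Theorem~\ref{Sannai.thm}): $A$ is $F$-rational if and only if $A$ is Cohen--Macaulay and $s(\omega_A)>0$. Since $F$-rationality, Cohen--Macaulayness, and the dual $F$-signature of the canonical module are all insensitive to completion (for the latter, note that $\hat A$ is faithfully flat over $A$ and the relevant invariants are preserved), we may replace $A$ by $\hat A$ and use $s(\omega_A)>0 \Leftrightarrow s(\omega_{\hat A})>0$. Thus condition~\textbf{1} is just the first half of Sannai's criterion, and it remains to show that, \emph{given} that $A$ is Cohen--Macaulay, the condition $s(\omega_{\hat A})>0$ is equivalent to the conjunction of \textbf{2} and \textbf{3}.

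The next step is to invoke Theorem~\ref{main.thm}, which already characterizes $s(\omega_{\hat A})>0$ by the vanishing $H^1(G,B\otimes_k\rad P_\nu)=0$ (condition~\textbf{3} of that theorem). The task is therefore to rewrite this vanishing, \emph{under the Cohen--Macaulay hypothesis on $A$}, as the pair of statements ``$H^1(G,B)=0$'' and ``$(B\otimes_k(I/k))^G$ is a maximal Cohen--Macaulay $A$-module.'' The natural device is the short exact sequence of $G$-modules $0\to\rad P_\nu\to P_\nu\to\det_V\to 0$ together with the structure of the projective cover. Here one should use that for the determinant representation $\det_V$ (a one-dimensional module), the projective cover $P_\nu$ and the injective hull are linked: in the self-injective algebra $kG$, twisting by $\det_V$ identifies $P_\nu$ with $\det_V\otimes_k kG$ up to the relevant action bookkeeping, so that $\rad P_\nu$ and the cokernel structures can be re-expressed in terms of $kG$ itself and the injective hull $I$ of the trivial module $k$. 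Concretely, I expect $B\otimes_k\rad P_\nu$ to fit into exact sequences whose cohomology relates $H^1(G,B\otimes_k\rad P_\nu)$ to $H^1(G,B)$ (capturing \textbf{2}) and to the depth of $(B\otimes_k(I/k))^G$ via the exact sequence $0\to k\to I\to I/k\to 0$ and the long exact sequence in group cohomology $H^i(G,B\otimes_k?)$; maximal Cohen--Macaulayness of $(B\otimes_k(I/k))^G = M_{I/k}$ over the Cohen--Macaulay ring $A$ is then detected by the vanishing of the appropriate local cohomology, which one translates into the vanishing of $H^{\geq 1}(G,-)$ pieces using the equivalence between $\Ref(A)$-type data and $(G,B)$-module data already used in the proof of Theorem~\ref{main.thm}.

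The order of operations is thus: (i) reduce to the completed setting and peel off condition~\textbf{1} via Sannai; (ii) cite Theorem~\ref{main.thm} to replace $s(\omega_{\hat A})>0$ by $H^1(G,B\otimes_k\rad P_\nu)=0$; (iii) use the projective-cover sequence for $\det_V$ and the identification $P_\nu\cong\det_V\otimes_k kG$ to rewrite $\rad P_\nu$, and combine with the sequence $0\to k\to I\to I/k\to 0$ and its $\det_V$-twist; (iv) run the long exact sequences in $H^\ast(G,B\otimes_k-)$ and read off that the single vanishing $H^1(G,B\otimes_k\rad P_\nu)=0$ splits into $H^1(G,B)=0$ plus the vanishing of the higher cohomology that is equivalent, under the Cohen--Macaulay hypothesis on $A$, to $M_{I/k}$ being a maximal Cohen--Macaulay $A$-module. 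The main obstacle I anticipate is step~(iv): making precise the bookkeeping that $\rad P_\nu$ (as a $G$-module) decomposes, after tensoring with $\det_V^{-1}$, into the augmentation ideal of $kG$ and a piece built from $I/k$, and then correctly matching the depth condition on $M_{I/k}$ with the vanishing range of $H^i(G,B\otimes_k -)$. One must be careful about which cohomological degree controls ``maximal Cohen--Macaulay'' — this is where the Cohen--Macaulayness of $A$ is genuinely used, since it pins down $\dim A$ and hence the relevant top local cohomology — and about the twist by $\det_V$ throughout, since $\omega_A = M_{\det_V}$ rather than $M_k$.
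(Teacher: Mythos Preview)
Your overall framing---reduce via Sannai to showing that, assuming $A$ is Cohen--Macaulay, the condition $s(\omega_{\hat A})>0$ (equivalently $H^1(G,B\otimes_k\rad P_\nu)=0$ by Theorem~\ref{main.thm}) is equivalent to {\bf 2} and {\bf 3}---is correct. The gap is in how you propose to make that bridge. First, a local error: $P_\nu$ is not $\det_V\otimes_k kG$; it is $\det_V\otimes_k P_0$ where $P_0$ is the projective cover of $k$. Consequently $\rad P_\nu\otimes\det_V^{-1}\cong\rad P_0$, which is a single (indecomposable, in the relevant block) module, not something that ``decomposes into the augmentation ideal of $kG$ and a piece built from $I/k$.'' The actual relationship to $I/k$ is via $k$-linear duality: from $0\to\rad P_0\to P_0\to k\to 0$ one gets $(\rad P_0)^*\cong P_0^*/k=I/k$, hence $(\rad P_\nu)^*\otimes\det_V\cong I/k$. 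There is no short exact sequence of $G$-modules linking $\rad P_\nu$ directly to $k$ and $I/k$ through projectives, so your plan to ``run long exact sequences in $H^*(G,B\otimes_k-)$'' and split the single vanishing into $H^1(G,B)=0$ plus a depth condition cannot be carried out as written.

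What the paper does instead---and what you are missing---is to pass through \emph{canonical duality over $A$}. The key identity is $\Hom_A\bigl((B\otimes_k W)^G,\omega_A\bigr)\cong (B\otimes_k W^*\otimes_k\det_V)^G$, so that applying $\Hom_A(-,\omega_A)$ to the sequence
\[
0\to (B\otimes\rad P_\nu)^G\to M_\nu\to\omega_A\to 0
\]
(which is exact precisely when $H^1(G,B\otimes\rad P_\nu)=0$) yields, term by term, the sequence
\[
0\to A\to (B\otimes I)^G\to (B\otimes(I/k))^G\to 0.
\]
The Cohen--Macaulay hypothesis on $A$ enters exactly here: over a Cohen--Macaulay ring, canonical duality sends exact sequences of maximal Cohen--Macaulay modules to exact sequences of maximal Cohen--Macaulay modules, and is an involution on such. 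Thus exactness of the first sequence (with all terms MCM, since $M_\nu$ is a summand of $B$ and $\omega_A$ is MCM) is equivalent to exactness of the second with $(B\otimes(I/k))^G$ MCM; and exactness of the second at the right is equivalent, via the long exact sequence for $0\to k\to I\to I/k\to 0$ and the injectivity of $I$, to $H^1(G,B)=0$. Your proposal never invokes $\Hom_A(-,\omega_A)$, and the suggested substitute---translating MCM-ness of $M_{I/k}$ into vanishing of some $H^{\geq 1}(G,-)$---has no general mechanism behind it.
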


  \begin{proof}
    If the order $|G|$ of $G$ is not divisible by $p$, then $A$ is $F$-rational, and
    the three conditions hold.
    So we may assume that $|G|$ is divisible by $p$.

    Assume that $A$ is $F$-rational.
    Then $A$ is Cohen--Macaulay.
    As $s(\omega_{\hat A})>0$, we have that $H^1(G,B\otimes_k \rad P_\nu)=0$,
    and
    \begin{equation}\label{F-rational.eq}
    0\rightarrow
    (B\otimes\rad P_\nu)^G
    \rightarrow
    (B\otimes P_\nu)^G
    \rightarrow
    (B\otimes\det_V)^G
    \rightarrow
    0
    \end{equation}
    is exact.
    As $M_\nu=(B\otimes P_\nu)^G$ is a direct summand of $B=M_{kG}=(B\otimes kG)^G$,
    it is a maximal Cohen--Macaulay module.
    As $(B\otimes\det)^G=\omega_A$, it is also a maximal Cohen--Macaulay module.
    So the canonical dual of the exact sequence (\ref{F-rational.eq}) is still exact.
    As there is an identification
    \[
    \Hom_A((B\otimes_k ?)^G,\omega_A)=\Hom_B(B\otimes_k ?,B\otimes_k \det_V)^G
    =(B\otimes_k ?^*\otimes_k \det_V)^G,
    \]
    we get the exact sequence of maximal Cohen--Macaulay $A$-modules
    \begin{equation}\label{F-rational2.eq}
    0\rightarrow A\rightarrow (B\otimes_k P_\nu^*\otimes_k \det_V)^G
    \rightarrow (B\otimes_k (\rad P_\nu)^*\otimes_k \det_V)^G
    \rightarrow 0.
    \end{equation}
    As $(\rad P_\nu)^*\otimes\det_V\cong I/k$, $(B\otimes(I/k))^G$ is maximal
    Cohen--Macaulay.
    As $I$ is an injective $G$-module, $B\otimes_k I$ is so as a $G$-module,
    and hence $H^1(G,B\otimes_k I)=0$.
    By the long exact sequence of the $G$-cohomology, we get $H^1(G,B)=0$.

    The converse is similar.
    Dualizing (\ref{F-rational2.eq}), we have that (\ref{F-rational.eq}) is exact.
  \end{proof}

  \begin{corollary}
    If $A$ is $F$-rational, then $H^1(G,k)=0$.
  \end{corollary}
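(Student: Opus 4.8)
The plan is to derive this directly from the preceding theorem, observing that $F$-rationality of $A$ forces $H^1(G,B)=0$ and then cutting this down to $H^1(G,k)$ using the grading of $B$. The key structural point is that $B=\Sym V=\bigoplus_{n\geq 0}\Sym^n V$, and since $G$ acts on $V$ by linear automorphisms it preserves this grading; hence this is a decomposition of $B$ as a $G$-module, and in particular $k=\Sym^0 V$, the trivial representation, is a $G$-module direct summand of $B$.

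First I would note that if $A$ is $F$-rational then $H^1(G,B)=0$ in all cases: if $p\nmid\abs{G}$ then $kG$ is semisimple, so $H^1(G,W)=0$ for every $G$-module $W$; and if $p\mid\abs{G}$ this is exactly condition {\bf 2} of the previous theorem. Next, since $k$ is a $G$-module direct summand of $B$, the inclusion $k\hookrightarrow B$ and a complementary projection $B\twoheadrightarrow k$ induce $k$-linear maps $H^1(G,k)\to H^1(G,B)\to H^1(G,k)$ whose composite is the identity on $H^1(G,k)$; thus $H^1(G,k)$ is a direct summand of $H^1(G,B)$. (Equivalently, one uses that $H^1(G,-)$ commutes with the direct sum $\bigoplus_{n\geq 0}\Sym^n V$, $G$ being finite.) Combining the two observations, $H^1(G,B)=0$ forces $H^1(G,k)=0$, which is the claim.

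There is essentially no obstacle in this argument; the only thing to make sure of is that the grading on $B=\Sym V$ is $G$-stable, so that $k$ genuinely appears as a $G$-module direct summand of $B$. Everything else is formal functoriality of group cohomology.
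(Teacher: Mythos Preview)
Your proof is correct and follows exactly the same approach as the paper: the paper's proof is the single line ``$k$ is a direct summand of $B$, and $H^1(G,B)=0$,'' which is precisely your argument. Your version simply spells out why $k$ is a $G$-summand of $B$ (via the grading $B=\bigoplus_n\Sym^n V$) and why this forces $H^1(G,k)$ to vanish, and your case split on whether $p\mid\abs{G}$ is harmless but unnecessary since the preceding theorem already gives $H^1(G,B)=0$ in both cases.
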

    
  \begin{proof}
    $k$ is a direct summand of $B$, and $H^1(G,B)=0$.
  \end{proof}

  \begin{example}
    If $p=2$ and $G=S_2$ or $S_3$, the symmetric groups, then $H^1(G,k)\neq 0$.
    So $A$ is not $F$-rational, provided $G$ does not have a pseudo-reflection.
  \end{example}

  \section{An example of $F$-rational ring of invariants which are not $F$-regular}
  \label{main-example.sec}

  \paragraph
  Let $p$ be an odd prime number, and $k$ an algebraically closed field of
  characteristic $p$.

  \paragraph
  Let us identify $\Map(\Bbb F_p,\Bbb F_p)^\times$ with the symmetric group $S_p$.
  We write $\Bbb F_p=\{0,1,\ldots,p-1\}$.
  Define
  \begin{eqnarray*}
  G & := & \{\phi\in S_p\mid \exists a\in\Bbb F_p^\times\;\exists b\in\Bbb F_p\;
  \forall x\in\Bbb F_p\;\phi(x)=ax+b\}\subset S_p;\\
  Q & := & \{\phi\in Q\mid \exists b\in\Bbb F_p\;
  \forall x\in\Bbb F_p\;\phi(x)=x+b\}\subset G;\\
  \Gamma & := & \{\phi\in S_p\mid \exists a\in\Bbb F_p^\times\;
  \forall x\in\Bbb F_p\;\phi(x)=ax\}\subset G.
  \end{eqnarray*}
  $G$ is a subgroup of $S_p$, $Q$ is a normal subgroup of $G$, and $\Gamma$ is a
  subgroup of $G$ such that $G=Q\rtimes \Gamma$.
  Note that $Q$ is cyclic of order $p$.
  $\Gamma$ is cyclic of order $p-1$.
  So $G$ is of order $p(p-1)$.

  \paragraph
  Let $\alpha$ be a primitive element of $\Bbb F_p$ (that is, a generator of the
  cyclic group $\Bbb F_p^\times$), and let $\tau\in\Gamma$ be the element
  given by $\tau(x)=\alpha x$.
  The only involution of $\Gamma$ is $\tau^{(p-1)/2}$, the multiplication by $-1$.
  As a permutation, it is
  \[
  (1\;(p-1))(2\;(p-2))\cdots((p-1)/2\;(p+1)/2),
  \]
  which is a transposition if and
  only if $p=3$.
  As $\Gamma$ contains a Sylow $2$-subgroup, a transposition of $G$, if any, is
  conjugate to an element of $\Gamma$, and it must be a transposition again.
  It follows that $G$ has a transposition if and only if $p=3$.

  \paragraph
  Now let $G\subset S_p$ act on $P=k^p=\langle w_0,w_1,\ldots,w_{p-1}\rangle$
  by the permutation action, that is, $\phi w_i=w_{\phi(i)}$ for $\phi\in G$ and
  $i\in\Bbb F_p$.
  $g\in G\subset \GL(P)$ is a pseudo-reflection if and only if it is a transposition.
  So $G$ has a pseudo-reflection if and only if $p=3$.
  
  Let $r\geq 1$, and set $V=P^{\oplus r}$.
  $G\subset \GL(V)$ has a pseudo-reflection if and only if $p=3$ and $r=1$.

\paragraph
  Let $S=\Sym P$.

\begin{lemma}\label{S-decomposition.lem}
  Let $M$ be any finitely generated non-projective
  indecomposable $G$-summand of $S$.
  Then $M\cong k$.
\end{lemma}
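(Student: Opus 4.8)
The claim is that every finitely generated non-projective indecomposable $G$-summand of $S = \Sym P$ is isomorphic to the trivial module $k$, where $G = Q \rtimes \Gamma$ with $Q$ cyclic of order $p$ and $\Gamma$ cyclic of order $p-1$ acting on $P = k^p$ by the affine permutation action. The plan is to reduce the question to modular representation theory of $G$ over $k$, using that the Sylow $p$-subgroup $Q$ is cyclic of order $p$, so $kG$ is of finite representation type and its indecomposables are completely understood. First I would recall the classification of indecomposable $kG$-modules: since $p \mid |G|$ with cyclic $p$-Sylow $Q$ of order $p$, the group $G$ has a cyclic defect group, and by Higman's theorem (or the theory of blocks with cyclic defect group, or a direct Green-correspondence argument) every indecomposable $kG$-module is either projective or has vertex $Q$; in the latter case it is the Green correspondent of an indecomposable $kQ$-module, and $kQ = k[Q]$ has exactly $p$ indecomposables, one of each dimension $1, 2, \dots, p$. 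The ones of dimension $< p$ are the non-projective ones.

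Next I would identify the non-projective indecomposable summands of $S$ explicitly. The point is that $S = \Sym P = k[w_0,\dots,w_{p-1}]$ decomposes, as a $G$-module, into homogeneous pieces $S_m = \Sym^m P$, and we must show none of these (beyond the summand $k = S_0$) contributes a non-projective indecomposable other than copies of $k$. The key structural input is the semidirect decomposition $G = Q \rtimes \Gamma$ together with the fact that $\Gamma$ has order prime to $p$: a $kG$-module restricted to $kQ$ determines its projectivity, and the non-projective part is controlled by the $Q$-fixed behavior. Concretely, I would examine the $Q$-module structure of $S$. Since $Q$ acts on $P$ as a single Jordan block plus a trivial summand is \emph{not} quite right—$Q$ acting by $w_i \mapsto w_{i+1}$ is a single $p$-cycle, so $P \cong kQ$ as a $kQ$-module (the regular representation). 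Hence $S = \Sym(kQ)$ as a $kQ$-module. One then uses the (classical) fact that the symmetric algebra on the regular representation of $Q = \mathbb Z/p$ is, up to projective summands, rather sparse: the non-projective indecomposable $kQ$-summands of $\Sym(kQ)$ are only the trivial ones. This can be seen by a direct computation with the $Q$-action, or by invoking that $\Sym(kQ)^Q = S^Q$ has the appropriate Cohen--Macaulay/free structure and a counting of Brauer characters forces the non-projective part to be trivial in each degree.

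The main obstacle I anticipate is pinning down precisely which indecomposable summands occur, i.e.\ ruling out that some $S_m$ contains a non-projective indecomposable of dimension strictly between $1$ and $p$, or a non-trivial one-dimensional module. For the latter: the only one-dimensional $kG$-modules factor through $G/Q \cong \Gamma \cong \mathbb Z/(p-1)$; a non-trivial such character $\chi$ appears in $S$ iff it appears in some $\Sym^m P$, and since $P|_{\Gamma}$ decomposes into all the $\Gamma$-characters, one would need to check, using the $Q$-action, that any $\chi$-isotypic vector that is also $Q$-fixed and sits in a one-dimensional summand is forced to be in a trivial summand — here the interplay $\tau Q \tau^{-1}$ with $\tau$ acting as multiplication by the primitive root $\alpha$ is essential, because it permutes the nontrivial $Q$-eigendata nontrivially. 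For the dimension-between-$1$-and-$p$ case, I would argue that such a module would have to restrict to $Q$ as a non-trivial, non-free indecomposable (a Jordan block of size $\ell$, $1 < \ell < p$), and then use that $S$ as a $kQ$-module, being $\Sym$ of the free module $kQ$, has a very restricted module structure — its indecomposable $kQ$-summands are only $k$ and free modules — which I would verify by an explicit filtration/weight argument on the $Q$-action on monomials. Combining: any non-projective indecomposable $kG$-summand $M$ of $S$ restricts to $kQ$ with non-projective part a sum of trivials, hence $M$ has vertex $Q$ with trivial source, so $M$ is the Green correspondent of $k_Q$, which is the trivial $kG$-module $k$. This gives $M \cong k$, as claimed.
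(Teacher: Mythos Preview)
Your overall strategy---restrict to $Q$, show the only non-projective $kQ$-summands of $S$ are trivial, then lift back to $G$---is sound, and your key computational claim is correct: since $Q$ permutes the monomial basis of $S$ and has prime order, every $Q$-orbit is either a fixed point (constant exponent vector) or free, so $S|_Q$ is a direct sum of copies of $k$ and free $kQ$-modules. However, the final step of your argument is broken. You write that $M$ ``is the Green correspondent of $k_Q$, which is the trivial $kG$-module $k$,'' but Green correspondence relates indecomposables over $G$ and over $N_G(Q)$, and here $Q$ is normal, so $N_G(Q)=G$ and the correspondence is vacuous. What trivial source actually buys you is that $M$ is a summand of $\Ind_Q^G k\cong k[G/Q]$, which decomposes as the $p-1$ one-dimensional characters of $\Gamma$ inflated to $G$. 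So you only get that $M$ is one-dimensional, not that $M\cong k$. You flag this issue earlier, but your proposed fix (``any $\chi$-isotypic vector that is also $Q$-fixed and sits in a one-dimensional summand is forced to be in a trivial summand'') is not an argument, and in fact $S^Q$ does contain plenty of non-trivial $\Gamma$-isotypic vectors; the point is that none of them splits off over $G$.

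The paper's proof avoids this difficulty by running the same monomial-orbit argument directly for $G$ rather than for $Q$. One observes that a monomial $w^\lambda$ is fixed by all of $G$ exactly when $\lambda$ is constant (since any $\lambda$ fixed by $Q$ is already constant), and otherwise $Q$ acts freely on its $G$-orbit. Thus $S$ decomposes as a $G$-module into $\bigoplus_\theta k\theta$ over $G$-orbits $\theta$ of monomials, where each $k\theta$ is either $k$ (singleton orbit) or $kQ$-free. Since $Q$ is a Sylow $p$-subgroup and $[G:Q]=p-1$ is invertible in $k$, $kQ$-free implies $kG$-projective (the paper phrases this via the Lyndon--Hochschild--Serre spectral sequence, but it is the standard fact that projectivity is detected on a Sylow $p$-subgroup). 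Hence $S$ is a direct sum of copies of $k$ and projectives, and Krull--Schmidt finishes. The moral: your detour through $Q$ and then back up to $G$ creates the extra problem of eliminating non-trivial one-dimensional summands; working with $G$-orbits from the start, the only singleton orbits are $G$-fixed and the issue never arises.
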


\begin{proof}
  Let $\Omega=\{w^\lambda=w_0^{\lambda_0}\cdots w_{p-1}^{\lambda_{p-1}}\mid \lambda
  =(\lambda_0,\ldots,\lambda_{p-1})\in\Bbb Z_{\geq 0}^p\}$ be the set of monomials
  of $S$.
  $G$ acts on the set $\Omega$.
  Let $\Theta$ be the set of orbits of this action of $G$ on $\Omega$.
  Let $G w^\lambda\in\Theta$.

  If $\lambda=(r,r,\ldots,r)$ for some $r\geq 0$, then $G w^\lambda=\{w^\lambda\}$,
  and hence $(kG)w^\lambda\cong k$.

  Otherwise, $Q$ does not have a fixed point on the action on $G w^\lambda$.
  As the order of $Q$ is $p$, $Q$ acts freely on $Gw^\lambda$.
  Hence $(kG)w^\lambda$ is $kQ$-free.

  Since the order of $G/Q\cong\Gamma$ is $p-1$, the Lyndon--Hochschild--Serre spectral
  sequence collapses, and
  we have $H^i(G,M)\cong H^i(Q,M)^\Gamma$ for any $G$-module $M$.
  So a $Q$-injective (or equivalently, $Q$-projective) $G$-module is $G$-injective (or
  equivalently, $G$-projective).

  As we have $S=\bigoplus_{\theta\in\Theta}k\theta$ as a $G$-module,
  $S$ is a direct sum of $G$-projective modules and copies of $k$.
  Using Krull-Schmidt theorem, it is easy to see that $M\cong k$.
\end{proof}

\begin{lemma}\label{projective-tensor.lem}
  Let $U$ and $W$ be $G$-modules.
  \begin{enumerate}
  \item[\bf 1] $kG\otimes_k W\cong kG\otimes_k W'$, where $W'$ is the $k$-vector
    space $W$ with the trivial $G$-action.
  \item[\bf 2] If $U$ is $G$-projective, then $U\otimes_k W$ is $G$-projective.
  \end{enumerate}
\end{lemma}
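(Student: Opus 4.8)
The plan is to prove \textbf{1} by writing down the standard ``diagonal trick'' isomorphism explicitly, and then to deduce \textbf{2} from it by a formal summand argument.

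For \textbf{1}, I would equip $kG\otimes_k W$ with the diagonal action $h\cdot(g\otimes w)=hg\otimes hw$ and $kG\otimes_k W'$ with the action on the first factor alone, $h\cdot(g\otimes w)=hg\otimes w$, and define the $k$-linear map $\varphi\colon kG\otimes_k W\to kG\otimes_k W'$ by $\varphi(g\otimes w)=g\otimes g^{-1}w$ on the natural generators $g\in G$, $w\in W$. A one-line check gives equivariance: $\varphi(h\cdot(g\otimes w))=\varphi(hg\otimes hw)=hg\otimes(hg)^{-1}hw=hg\otimes g^{-1}w=h\cdot\varphi(g\otimes w)$. The $k$-linear map $\psi(g\otimes w)=g\otimes gw$ is a two-sided inverse, so $\varphi$ is an isomorphism of $G$-modules. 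Note this needs no finiteness hypothesis on $W$.

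For \textbf{2}, if $U$ is $G$-projective then $U$ is a direct summand of a free $kG$-module $F$, i.e.\ $F\cong U\oplus U''$ with $F$ a direct sum of copies of $kG$. Since $k$ is a field, $-\otimes_k W$ is exact, so $F\otimes_k W\cong(U\otimes_k W)\oplus(U''\otimes_k W)$, exhibiting $U\otimes_k W$ as a direct summand of $F\otimes_k W$. But $F\otimes_k W$ is a direct sum of copies of $kG\otimes_k W$, and by \textbf{1} each $kG\otimes_k W\cong kG\otimes_k W'$ is free (a direct sum of $\dim_k W$ copies of $kG$). Hence $F\otimes_k W$ is free and $U\otimes_k W$, being a direct summand of a free module, is $G$-projective.

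There is essentially no obstacle here: the only point requiring a moment's care is bookkeeping of which $G$-action sits on which tensor factor when checking equivariance of $\varphi$, together with the observation that the argument is insensitive to the (possibly infinite) dimensions of $U$ and $W$, since an arbitrary direct sum of copies of $kG$ is free and direct summands of free modules are projective.
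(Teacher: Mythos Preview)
Your proof is correct and follows exactly the paper's approach: the paper gives the same map $g\otimes w\mapsto g\otimes g^{-1}w$ for \textbf{1} and simply says \textbf{2} follows from \textbf{1}, so you have merely expanded the details of the terse original.
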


\begin{proof}
  {\bf 1}.
  $g\otimes w \mapsto g\otimes g^{-1}w$ gives such an isomorphism.

  {\bf 2} follows from {\bf 1}.
\end{proof}

\paragraph
Let $B:=\Sym V=\Sym P^{\oplus r}\cong S^{\otimes r}$.

\begin{lemma}\label{B-decomposition.lem}
  Let $M$ be any finitely generated non-projective
  indecomposable $G$-summand of $B$.
  Then $M\cong k$.
\end{lemma}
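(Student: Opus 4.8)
The plan is to bootstrap from Lemma~\ref{S-decomposition.lem} using the tensor-product structure $B\cong S^{\otimes r}$ together with the projectivity-preserving property established in Lemma~\ref{projective-tensor.lem}. First I would record, exactly as in the proof of Lemma~\ref{S-decomposition.lem}, that a $Q$-projective $G$-module is $G$-projective: since $[G:Q]=p-1$ is prime to $p$, the Lyndon--Hochschild--Serre spectral sequence gives $H^i(G,M)\cong H^i(Q,M)^\Gamma$, so vanishing of $Q$-cohomology forces vanishing of $G$-cohomology, hence $G$-injectivity and (as $kG$ is selfinjective) $G$-projectivity. This is the tool that lets me detect projectivity after tensoring.

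Next I would decompose each tensor factor. By Lemma~\ref{S-decomposition.lem}, write $S\cong S_{\mathrm{proj}}\oplus k^{\oplus m}$ as a $G$-module, where $S_{\mathrm{proj}}$ is a (possibly infinite) direct sum of finitely generated projective $G$-modules and $k^{\oplus m}$ collects the trivial summands. Then $B\cong S^{\otimes r}$ distributes over these direct sums, and a typical summand of the expansion is a tensor product $X_1\otimes_k\cdots\otimes_k X_r$ with each $X_j$ either a projective $G$-module or a copy of $k$. If every $X_j$ is $k$, the summand is $k$. If at least one $X_j$ is projective, say $X_1$, then by Lemma~\ref{projective-tensor.lem}(2) (applied with $U=X_1$ and $W=X_2\otimes_k\cdots\otimes_k X_r$) the whole tensor product $X_1\otimes_k W$ is $G$-projective. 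Thus $B$ is a direct sum of $G$-projective modules and copies of $k$.

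Finally I would invoke the Krull--Schmidt theorem exactly as in Lemma~\ref{S-decomposition.lem}: any finitely generated indecomposable $G$-summand $M$ of $B$ is, up to isomorphism, a summand of a finite partial direct sum of the above pieces, hence either projective or isomorphic to $k$; since $M$ is assumed non-projective, $M\cong k$. The only point requiring a little care is the bookkeeping for infinite direct sums — $S$ and hence $B$ are infinite-dimensional, so one should note that every finitely generated summand already appears as a summand of a finite subsum, which is the standard argument used implicitly in the proof of Lemma~\ref{S-decomposition.lem}; I expect this to be the main (and only mild) obstacle, and it is handled by the same Krull--Schmidt reasoning invoked there.
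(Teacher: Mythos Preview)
Your argument is correct and follows the same route the paper intends: decompose $S$ via Lemma~\ref{S-decomposition.lem}, expand $B\cong S^{\otimes r}$, and use Lemma~\ref{projective-tensor.lem} to see that any mixed tensor with a projective factor is projective, leaving only copies of $k$ among the non-projective summands. The paper's proof is just the one-line citation of these two lemmas; you have written out what that citation means. One minor remark: your opening paragraph on $Q$-projectivity implying $G$-projectivity is unnecessary here, since Lemma~\ref{projective-tensor.lem}(2) already gives preservation of $G$-projectivity under tensoring directly; you never actually use the $Q$-projectivity criterion in the rest of the argument.
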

  
\begin{proof}  
Follows immediately from Lemma~\ref{S-decomposition.lem} and
Lemma~\ref{projective-tensor.lem}.
\end{proof}

\begin{lemma}\label{det_V.lem}
  Let $k_-$ denote the sign representation.
  Then   $\det_V\cong k_-$ if $r$ is odd, and $\det_V\cong k$ if $r$ is even.
  $k_-$ is not isomorphic to $k$.
\end{lemma}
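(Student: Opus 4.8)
The plan is to reduce the computation of $\det_V$ to the determinant character of the permutation module $P$, and then to verify by hand that the sign character on $G$ is nontrivial. For the first part, I would use the standard $G$-equivariant isomorphism $\ext^{\dim(U\oplus W)}(U\oplus W)\cong\ext^{\dim U}U\otimes_k\ext^{\dim W}W$, coming from the fact that in the K\"unneth-type decomposition $\ext^{n}(U\oplus W)\cong\bigoplus_{i+j=n}\ext^iU\otimes_k\ext^jW$ only the term with $i=\dim U$, $j=\dim W$ survives in top degree $n=\dim(U\oplus W)$. Applying this $r-1$ times to $V=P^{\oplus r}$ gives $\det_V=\ext^{\dim V}V\cong(\ext^pP)^{\otimes r}=(\det_P)^{\otimes r}$. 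Now $\det_P=\ext^pP$ is one-dimensional, and for $\phi\in G\subset S_p$ the matrix of $\phi$ acting on $P=k^p$ in the basis $w_0,\ldots,w_{p-1}$ is the permutation matrix of $\phi$, whose determinant is $\operatorname{sgn}(\phi)$; hence $\det_P$ is exactly the restriction to $G$ of the sign character, i.e. $\det_P\cong k_-$. Therefore $\det_V\cong k_-^{\otimes r}$, and since $k_-$ is one-dimensional with $\operatorname{sgn}(\phi)^2=1$ for all $\phi\in G$, we get $k_-^{\otimes2}\cong k$, so $k_-^{\otimes r}\cong k_-$ for $r$ odd and $k_-^{\otimes r}\cong k$ for $r$ even, which is the first assertion.

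It remains to show $k_-\not\cong k$, that is, that the sign character is nontrivial on $G$; it suffices to exhibit one element of $G$ that is an odd permutation of $\Bbb F_p=\{0,1,\ldots,p-1\}$. I would take $\tau\in\Gamma\subset G$ given by $\tau(x)=\alpha x$, where $\alpha$ is the chosen primitive element of $\Bbb F_p$. Then $\tau$ fixes $0$, and because $\alpha$ generates the cyclic group $\Bbb F_p^\times$ of order $p-1$, the element $\tau$ permutes $\{1,\ldots,p-1\}$ as a single $(p-1)$-cycle. A $(p-1)$-cycle is a product of $p-2$ transpositions, so $\operatorname{sgn}(\tau)=(-1)^{p-2}=-1$ because $p$ is odd. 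Hence $k_-$ is a nontrivial one-dimensional representation of $G$, and in particular $k_-\not\cong k$.

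The whole argument is essentially bookkeeping, and no serious obstacle is anticipated; the only point where the running hypotheses genuinely enter is the final step, where $p$ being odd --- equivalently $p-1$ being even, equivalently $G\not\subset A_p$ --- is precisely what keeps the sign character from degenerating to the trivial one.
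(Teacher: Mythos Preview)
Your proof is correct and follows essentially the same route as the paper: reduce to $\det_V\cong(\det_P)^{\otimes r}\cong k_-^{\otimes r}$ via the top exterior power of a direct sum, identify $\det_P$ with the sign character, and then exhibit $\tau=(x\mapsto\alpha x)$ as an odd permutation (a single $(p-1)$-cycle on $\Bbb F_p^\times$) to see that $k_-\not\cong k$. Your write-up simply makes explicit the parity computation for $\tau$ that the paper leaves as ``clear.''
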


\begin{proof}
  As the determinant of a sign matrix is the signature of the permutation,
  $\det_P\cong k_-$.
  Hence $\det_V \cong (\det_P)^{\otimes r}\cong (k_-)^{\otimes r}$,
  and we get the desired result.
  The last assertion is clear, since $\tau=(x\mapsto \alpha x)\in\Gamma$ is a
  cyclic permutation of order $p-1$, and is an odd permutation.
\end{proof}

\begin{theorem}\label{kemper.thm}
  We have
  \[
  \depth_A =\min\{rp,2(p-1)+r\}.
  \]
  Hence $A$ is Cohen--Macaulay if and only if $r\leq 2$.
\end{theorem}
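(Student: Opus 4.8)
The plan is to read off the formula from Kemper's theorem on the depth of invariant rings of permutation groups, \cite[(3.3)]{Kemper}. Observe first that $B=\Sym V$ is a polynomial ring in $\dim_k V=rp$ variables, so $\dim A=rp$, and that $G$ acts on $V=P^{\oplus r}$ through its permutation action on the basis $\{w_{i}\}$ of $P$; moreover, by Lemma~\ref{B-decomposition.lem}, as a $kG$-module $B$ is a direct sum of projective (equivalently, $kG$-free) modules and copies of the trivial module $k$. This is exactly the situation covered by Kemper's result, which in the form we need gives $\depth A=\min\{\dim_k V,\ \dim_k V^{G}+t+1\}$, where $t$ is the least integer $>0$ with $H^{t}(G,B)\neq 0$; because of the above decomposition one has $H^{i}(G,B)\neq 0\iff H^{i}(G,k)\neq 0$ for $i>0$, so $t$ is also the least positive degree in which $H^{*}(G,k)$ is nonzero. (Pseudo-reflections are irrelevant here: the formula must hold also when $p=3$ and $r=1$, where $G=S_{3}$ acts as a reflection group and $A$ is a polynomial ring of depth $3=\min\{3,5\}$.)

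It remains to compute the two numerical inputs. Since $G\subseteq S_{p}$ acts transitively on the basis $w_{0},\dots,w_{p-1}$ of $P$, the fixed space $P^{G}$ is spanned by $w_{0}+\cdots+w_{p-1}$, whence $\dim_k V^{G}=\dim_k (P^{G})^{\oplus r}=r$. For $t$ I would use the extension $1\to Q\to G\to \Gamma\to 1$: as $\abs{\Gamma}=p-1$ is prime to $p$, the Lyndon--Hochschild--Serre spectral sequence collapses and $H^{i}(G,k)\cong H^{i}(Q,k)^{\Gamma}$, exactly as in the proof of Lemma~\ref{S-decomposition.lem}. Now $H^{*}(Q,k)=\Lambda_{k}(x)\otimes k[y]$ with $\deg x=1$, $\deg y=2$, $y=\beta(x)$ the Bockstein, and the generator $\tau\in\Gamma$ acts on $H^{1}(Q,k)=\Hom(Q,k)$ by a scalar $\alpha^{-1}$ (precomposition with multiplication by $\alpha$ on $Q=\mathbb F_{p}$; the sign of the exponent is immaterial below since $\alpha$ is a primitive root), hence by the same scalar on $y$, and so on $x\,y^{j}$ by $\alpha^{-(j+1)}$ and on $y^{j}$ by $\alpha^{-j}$. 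Thus $x\,y^{j}$ is $\Gamma$-invariant iff $(p-1)\mid j+1$ and $y^{j}$ iff $(p-1)\mid j$, so the least positive degree carrying an invariant class is that of $x\,y^{p-2}$, namely $2(p-2)+1=2p-3$; hence $t=2p-3$. Plugging in, $\depth A=\min\{rp,\ r+(2p-3)+1\}=\min\{rp,\ 2(p-1)+r\}$, which is the claimed equality. Since $\dim A=rp$, $A$ is Cohen--Macaulay iff $rp\le 2(p-1)+r$, i.e.\ iff $(r-2)(p-1)\le 0$, i.e.\ (as $p\ge 3$) iff $r\le 2$.

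The main obstacle I anticipate is the faithful invocation of \cite[(3.3)]{Kemper}: one must verify that our $(G,V)$ together with the $kG$-module decomposition of $B$ from Lemma~\ref{B-decomposition.lem} meets Kemper's hypotheses precisely, and that the numerical invariant occurring there really is the $\Gamma$-invariant cohomological degree $t$ computed above and not a neighbouring quantity. The cohomology computation is the one place where an off-by-one slip would change the final formula, so I would double-check it against the classical identity $H^{*}(S_{3};k)=\Lambda_{k}(u)\otimes k[v]$ with $\deg u=3$, $\deg v=4$ in characteristic $3$, which indeed yields $t=3=2p-3$ for $p=3$.
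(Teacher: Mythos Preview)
Your proposal is correct and follows the paper's approach, which consists solely of the one-line citation to \cite[(3.3)]{Kemper}; you have in addition carried out the computations of $\dim_k V^{G}=r$ and of the first nonvanishing cohomological degree $t=2p-3$ via $H^{i}(G,k)\cong H^{i}(Q,k)^{\Gamma}$, which is exactly what makes that citation effective. One harmless slip: for this $G$ projective $kG$-modules need not be free (already for $G=S_3$ in characteristic~$3$ there are two non-isomorphic indecomposable projectives), but your argument only uses projectivity, so nothing is affected.
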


\begin{proof}
  This is an immediate consequence of \cite[(3.3)]{Kemper}.
\end{proof}

\begin{theorem}\label{main-example.thm}
  Let $p$, $r$, $G$, $P$, $V=P^{\oplus r}$, $B=\Sym V$ be as above, and $A:=B^G$.
  Then
  \begin{enumerate}
  \item[\bf 1] $G$ is a finite subgroup of $\GL(V)$ of order $p(p-1)$.
  \item[\bf 2] $G\subset \GL(V)$ has a pseudo-reflection if and only if $p=3$ and $r=1$.
    If so, $G=S_3$ is the symmetric group acting regularly on $B=k[w_0,w_1,w_2]$ by
    permutations on $w_0,w_1,w_2$.
    The ring of invariants $A$ is the polynomial ring.
    Otherwise, $A$ is not weakly $F$-regular.
  \item[\bf 3] If $p\geq 5$ and $r=1$, then $A$ is $F$-rational, but not weakly
    $F$-regular.
  \item[\bf 4] If $r=2$, then $A$ is Gorenstein, but not $F$-rational.
  \item[\bf 5] If $r\geq 3$ and $r$ is odd, then $s(\omega_{\hat A})>0$ but
    $A$ is not Cohen--Macaulay.
  \item[\bf 6] If $r\geq 4$ and even, then $A$ is quasi-Gorenstein, but not
    Cohen--Macaulay.
  \end{enumerate}
\end{theorem}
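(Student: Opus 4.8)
The plan is to verify the six assertions by combining the group-theoretic preliminaries established in this section with the general criteria from Sections~\ref{dual-F-signature.sec} and \ref{main.sec}. The bookkeeping is mostly organized around the parity of $r$ and the size of $r$ relative to the Cohen--Macaulay bound from Theorem~\ref{kemper.thm}.

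For \textbf{1}, the order of $G$ was computed in the paragraph following the definition of $G$, $Q$, $\Gamma$: since $G=Q\rtimes\Gamma$ with $\abs{Q}=p$ and $\abs{\Gamma}=p-1$, we get $\abs{G}=p(p-1)$, and $G$ acts faithfully on $V=P^{\oplus r}$ through the faithful permutation action on $P$. For \textbf{2}, the paragraph on pseudo-reflections showed that $g\in G\subset\GL(V)$ is a pseudo-reflection iff $g$ acts as a transposition on some single copy of $P$ and trivially on the rest, which forces $r=1$ and (by the analysis that $G$ has a transposition iff $p=3$) $p=3$. In that case $G=S_3$ acting on $k[w_0,w_1,w_2]$, so $A=k[w_0,w_1,w_2]^{S_3}$ is the polynomial ring on the elementary symmetric functions. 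If $p=3,r=1$ fails while still $p\mid\abs{G}$ (which always holds here), then Broer's result cited in the introduction gives that $A$ is not a direct summand of $B$, hence not a splinter, hence not weakly $F$-regular.

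The heart of the matter is \textbf{3}. Here $p\geq 5$ and $r=1$, so $V=P$ and $B=S$. By Theorem~\ref{kemper.thm} with $r=1$, $\depth_A=\min\{p,2(p-1)+1\}=p=\dim A$, so $A$ is Cohen--Macaulay. By Lemma~\ref{B-decomposition.lem} (equivalently Lemma~\ref{S-decomposition.lem}), every non-projective finitely generated indecomposable $G$-summand of $B$ is isomorphic to $k$; and by Lemma~\ref{det_V.lem}, $\det^{-1}_V\cong k_-\not\cong k$ since $r=1$ is odd. Thus no non-projective indecomposable $G$-summand of $B$ contains $\det^{-1}_V$, which is exactly condition \textbf{4} of Theorem~\ref{main.thm}; hence $s(\omega_{\hat A})>0$, and since $A$ is Cohen--Macaulay, Theorem~\ref{Sannai.thm} gives that $A$ (equivalently $\hat A$) is $F$-rational. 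Non-weak-$F$-regularity is from part~\textbf{2}. For \textbf{4} ($r=2$): $\depth_A=\min\{2p,2(p-1)+2\}=2p=\dim A$, so $A$ is Cohen--Macaulay, and by Lemma~\ref{det_V.lem} $\det_V\cong k$ since $r$ is even, so $\omega_A\cong M_k=(B\otimes_k k)^G=A$, i.e.\ $A$ is Gorenstein. It is not $F$-rational because condition \textbf{3} of the $F$-rationality theorem (or directly: $(B\otimes_k(I/k))^G$ maximal Cohen--Macaulay, equivalently exactness of the sequence from Theorem~\ref{main.thm}) fails — indeed $\det^{-1}_V\cong k$ is a (non-projective) summand of $B$, so condition~\textbf{4} of Theorem~\ref{main.thm} is violated and $s(\omega_{\hat A})=0$. (Alternatively invoke Glassbrenner's $A_n$ example or a direct cohomological check that $H^1(G,B)\neq 0$ here.)

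For \textbf{5} and \textbf{6}, $r\geq 3$. By Theorem~\ref{kemper.thm}, $\dim A=rp$ while $\depth_A=\min\{rp,2(p-1)+r\}$; since $r\geq 3$ and $p\geq 5$ one checks $2(p-1)+r<rp$ (e.g.\ $rp-2(p-1)-r=(r-2)(p-1)-1\geq 1$), so $A$ is not Cohen--Macaulay. If moreover $r$ is odd (\textbf{5}), then again $\det^{-1}_V\cong k_-\not\cong k$ by Lemma~\ref{det_V.lem}, so by Lemma~\ref{B-decomposition.lem} condition~\textbf{4} of Theorem~\ref{main.thm} holds, giving $s(\omega_{\hat A})>0$ (indeed $\geq 1/\abs{G}$) even though $A$ is not Cohen--Macaulay. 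If $r\geq 4$ is even (\textbf{6}), then $\det_V\cong k$, so $\omega_A\cong A$, i.e.\ $A$ is quasi-Gorenstein (the canonical module is free of rank one, or equivalently $\hat A$ satisfies Serre's condition and its canonical module is isomorphic to itself) but not Cohen--Macaulay, hence not Gorenstein. The main obstacle is simply organizing the four cases correctly and invoking the right equivalence from Theorem~\ref{main.thm} in each — in particular, being careful that the representation-theoretic criterion (condition~\textbf{4} there) is applied with $\det^{-1}_V$, not $\det_V$, and that Lemma~\ref{det_V.lem} pins down which of $k$, $k_-$ this is according to the parity of $r$; everything else is a direct citation of Theorem~\ref{kemper.thm}, Theorem~\ref{Sannai.thm}, Broer's theorem, and the structural lemmas of this section.
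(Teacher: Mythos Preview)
Your proof is correct and follows the paper's route almost exactly: the key dichotomy $s(\omega_{\hat A})>0\iff r$ odd is obtained just as in the paper by combining Lemma~\ref{B-decomposition.lem}, Lemma~\ref{det_V.lem}, and condition~{\bf 4} of Theorem~\ref{main.thm}, and the Cohen--Macaulay/non-Cohen--Macaulay split is read off from Theorem~\ref{kemper.thm}. The one substantive difference is in part~{\bf 4}: the paper concludes non-$F$-rationality from the Hochster--Huneke fact \cite[(4.7)]{HH2} that a Gorenstein $F$-rational ring is weakly $F$-regular, whereas you instead argue that condition~{\bf 4} of Theorem~\ref{main.thm} fails (since $\det_V^{-1}\cong k$ is a non-projective summand of $B$), whence $s(\omega_{\hat A})=0$ and Theorem~\ref{Sannai.thm} gives the conclusion---both arguments are valid and yours stays entirely within the machinery of the paper.

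Two small corrections: in parts~{\bf 5}--{\bf 6} you assume $p\geq 5$, but the standing hypothesis is only $p$ odd, and indeed $rp-(2(p-1)+r)=(r-2)(p-1)$ (not $(r-2)(p-1)-1$), which is $\geq 2$ already for $r\geq 3$ and $p\geq 3$; so the non-Cohen--Macaulay conclusion holds for $p=3$ as well.
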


\begin{proof}
  We have already seen {\bf 1} and the first statement of {\bf 2}.
  If $p=3$ and $r=1$, then
  $G\subset S_3$ has order $6$, and $G=S_3$.
  So $A$ is the polynomial ring generated by the symmetric polynomials.
  Otherwise, as $G$ does not have a pseudo-reflection and the order $\abs{G}$ of $G$
  is divisible by $p$, $A$ is not weakly $F$-regular, see \cite{Broer}, \cite{Yasuda},
  and \cite[(5.8)]{SH}.

  The only non-projective finitely generated indecomposable $G$-summand of $B$ is
  $k$ by Lemma~\ref{B-decomposition.lem}, and $\det_V^{-1}\subset k$ if and only
  if $r$ is even by Lemma~\ref{det_V.lem}.
  Hence we have that $s(\omega_{\hat A})>0$ if
  and only if $r$ is odd by Theorem~\ref{main.thm}.

  {\bf 3}. $A$ is not weakly $F$-regular by {\bf 2}.
  As $r=1$ is odd, $s(\omega_{\hat A})>0$.
  On the other hand, $A$ is Cohen--Macaulay by Theorem~\ref{kemper.thm}.
  Hence $A$ is $F$-rational by Theorem~\ref{Sannai.thm}.

  {\bf 4}.
  By Theorem~\ref{kemper.thm}, $A$ is Cohen--Macaulay.
  On the other hand, by Lemma~\ref{det_V.lem}, $\det_V\cong k$, and hence
  $\omega_A\cong (B\otimes_k \det_V)^G\cong B^G\cong A$ by Lemma~\ref{canonical.lem}.
  So $A$ is Gorenstein.
  As $A$ is Gorenstein but not weakly $F$-regular, it is not $F$-rational
  by \cite[(4.7)]{HH2}.
  
  {\bf 5} and {\bf 6} are easy.
\end{proof}

\end{document}